\DeclareFontFamily{U}{wncy}{}
\DeclareFontShape{U}{wncy}{m}{n}{
<5>wncyr5
<6>wncyr6
<7>wncyr7
<8>wncyr8
<9>wncyr9
<10>wncyr10
<11>wncyr10
<12>wncyr6
<14>wncyr7
<17>wncyr8
<20>wncyr10
<25>wncyr10}{}
\DeclareMathAlphabet{\cyr}{U}{wncy}{m}{n}
\newcommand{\BibTeX}{{\scshape Bib}\kern-.08em\TeX}
\newcommand{\T}{\S\kern .15em\relax }
\newcommand{\AMS}{$\mathcal{A}$\kern-.1667em\lower.5ex\hbox
        {$\mathcal{M}$}\kern-.125em$\mathcal{S}$}
\title[Fibrations en torseurs sous un tore]{Principes locaux-globaux pour certaines fibrations en torseurs sous un tore}
\date {2014}
\author{Arne Smeets}
\address{Departement Wiskunde, KU Leuven, Celestijnenlaan 200B, 3001
  Leuven, Belgium \emph{et} D\'epartement de Math\'ematiques,
  B\^atiment 425, Universit\'e Paris-Sud 11, 91405 Orsay, France}
\email{arnesmeets@gmail.com}
\urladdr{}
\keywords{Principe de Hasse, approximation faible, obstruction de Brauer-Manin}
\begin{document}
\def\smfbyname{}

\begin{abstract} Soit $k$ un corps de nombres et soit $T$ un $k$-tore. Consid\'erons une fibration en torseurs sous $T$, c'est-\`a-dire un morphisme $f: X \to \mathbb{P}^1_k$ d'une $k$-vari\'et\'e projective et lisse $X$ vers $\mathbb{P}^1_k$ dont la fibre g\'en\'erique $X_\eta \to \eta$ est une compactification lisse d'un espace principal homog\`ene sous $T \otimes_k \eta$. On \'etudie dans ce texte l'obstruction de Brauer-Manin au principe de Hasse et \`a l'approximation faible pour $X$, sous l'hypoth\`ese de Schinzel. On g\'en\'eralise les r\'esultats r\'ecents de Wei \cite{Wei11}. Nos r\'esultats sont inconditionnels si $k = \mathbf{Q}$ et les fibres non-scind\'ees de $f$ sont d\'efinies sur $\mathbf{Q}$. On \'etablit \'egalement un analogue inconditionnel de notre r\'esultat principal pour les z\'ero-cycles de degr\'e $1$. \vskip 10pt \noindent \textbf{\emph{Summary.} ---}  Let $k$ be a number field and $T$ a $k$-torus. Consider a family of torsors under $T$, i.e. a morphism $f: X \to \mathbb{P}^1_k$ from a projective, smooth $k$-variety $X$ to $\mathbb{P}^1_k$, the generic fibre $X_\eta \to \eta$ of which is a smooth compactification of a principal homogeneous space under $T \otimes_k \eta$. We study the Brauer--Manin obstruction to the Hasse principle and to weak approximation for $X$, assuming Schinzel's hypothesis. We generalize Wei's recent results \cite{Wei11}. Our results are unconditional if $k = \mathbf{Q}$ and all non-split fibres of $f$ are defined over $\mathbf{Q}$. We also establish an unconditional analogue of our main result for zero-cycles of degree $1$. 
\end{abstract}
\maketitle


\section{Introduction}

\subsection{Historique et r\'esultats} Fixons un corps de nombres $k$, une cl\^oture alg\'ebrique $\overline{k}$ de $k$ et une extension ab\'elienne maximale $k^\mathrm{ab} \subseteq \overline{k}$ de $k$. Pour les notions de base sur le principe de Hasse, l'approximation faible, le groupe de Brauer et l'obstruction de Brauer-Manin, voir par exemple \cite[\S 3]{CTSan87a}.

On s'int\'eresse dans cet article aux principes locaux-globaux pour certaines vari\'et\'es projectives et lisses $X$, d\'efinies sur le corps de nombres $k$ et fibr\'ees sur la droite projective, c'est-\`a-dire munies d'un morphisme (dominant) $f: X \to \mathbb{P}^1_k$. 

La question si l'obstruction de Brauer-Manin contr\^ole la validit\'e du principe de Hasse et l'approximation faible pour l'espace total d'une telle fibration $f: X \to \mathbb{P}^1_k$, s'il en est ainsi pour les fibres de $f$, a \'et\'e \'etudi\'ee par de nombreux auteurs. Citons par exemple les articles classiques de Harari \cite{Har}, Colliot-Th\'el\`ene et Swinnerton-Dyer \cite{CTSD94} et Colliot-Th\'el\`ene, Skorobogatov et Swinnerton-Dyer \cite{CTSkoSD98}. Ce probl\`eme est  difficile: l'hypoth\`ese de Schinzel \cite[\S 4]{CTSD94} est devenu un outil standard pour traiter le cas d'une fibration avec plusieurs fibres non-scind\'ees. Or, si on ne suppose pas que le principe de Hasse et l'approximation faible valent pour les fibres de $f$ (comme dans \cite{CTSkoSD98}) mais seulement que l'obstruction de Brauer-Manin est la seule dans les fibres, tr\`es peu est connu sur la situation. On s'int\'eresse dans ce texte \`a une classe particuli\`ere de telles fibrations:

\begin{defi} Soit $T$ un $k$-tore. On d\'efinit une \emph{fibration en torseurs sous $T$} comme une $k$-vari\'et\'e projective, lisse et g\'eom\'etriquement connexe $X$, munie d'un morphisme dominant $f: X \to \mathbb{P}^1$ tel que la fibre g\'en\'erique $X_\eta \to \eta$ de $f$ soit une compactification lisse d'un espace principal homog\`ene sous le $\eta$-tore $T \times_k \eta$.  \end{defi}

Le cas particulier d'un tore dit ``normique'' $T = R^1_{K/k}\mathbb{G}_m$ associ\'e \`a une extension finie $K/k$ a \'et\'e \'etudi\'e de fa\c{c}on tr\`es intensive. Ce cas correspond \`a la classe des vari\'et\'es qui sont des compactifications lisses d'une vari\'et\'e affine donn\'ee par une \'equation de la forme $$N_{K/k}(\mathbf{x}) = P(t) \neq 0,$$ o\`u $P(t) \in k[t]$ est un polyn\^ome non constant. Citons les articles r\'ecents \cite{BHB11}, \cite{DSW}, \cite{Liang12}, \cite{SchSk}, \cite{VV} et \cite{Wei11}. Le spectre des techniques employ\'ees est large: citons la m\'ethode de la descente (\cite{CTHarSko03}, \cite{DSW}), les techniques de la th\'eorie analytique des nombres (\cite{BHB11}), la m\'ethode des fibrations (\cite{Liang12}, \cite{Wei11}) ou une combinaison de celles-ci (\cite{CTHarSko03}, \cite{HBSko}, \cite{SchSk}).

On g\'en\'eralise ici les r\'esultats de Wei \cite{Wei11} obtenus  via la m\'ethode des fibrations (sous l'hypoth\`ese de Schinzel), tout en donnant des preuves plus simples. Wei a \'etudi\'e l'obstruction de Brauer-Manin pour les \'equations ``normiques'' $N_{K/k}(\mathbf{x}) = P(t)$, avec $K/k$ une extension \emph{ab\'elienne}. Son r\'esultat principal \cite[Theorem 3.1]{Wei11} dit que le principe de Hasse et l'approximation faible valent pour toute compactification lisse d'une telle vari\'et\'e (sous l'hypoth\`ese de Schinzel), d\`es que l'obstruction de Brauer-Manin ne s'y oppose pas et que la condition $\cyr{X}^2_\omega(\widehat{T}) = \cyr{X}^2_{\omega}(\widehat{T})_P$ est satisfaite (voir \cite[D\'efinition 2.4]{CTHarSko03}). Comme il le remarque \cite[\S 3]{Wei11}, cette derni\`ere condition est \'equivalente \`a la surjectivit\'e de la fl\`eche naturelle $$\mathrm{Br}(X^{\mathsf{CTHS}})/\mathrm{Br}(k) \longrightarrow \mathrm{Br}(X_\eta)/\mathrm{Br}(\eta),$$ o\`u $X^\mathsf{CTHS}$ est une certaine compactification partielle de la vari\'et\'e affine d\'efinie par l'\'equation $N_{K/k}(\mathbf{x}) = P(t) \neq 0$, construite dans \cite[\S 2]{CTHarSko03}. 

On g\'en\'eralise son r\'esultat dans plusieurs directions, dans le sens qu'on traite des tores g\'en\'eraux au lieu des tores ``normiques'', et qu'on impose une condition moins forte que sa condition $\cyr{X}^2_\omega(\widehat{T}) = \cyr{X}^2_{\omega}(\widehat{T})_P$. 

Le r\'esultat principal de ce texte -- qui englobe \'egalement \cite[Theorem 3.2]{Wei11} -- est le th\'eor\`eme suivant.

\begin{theo}
\label{principal} Soit $T$ un $k$-tore quasi-d\'eploy\'e par $k^{\mathrm{ab}}$. Soit $f: X \to \mathbb{P}^1_k$ une fibration en torseurs sous $T$. Alors le groupe $\mathrm{Br}(X_\eta)/\mathrm{Br}(\eta)$ est ab\'elien fini. Faisons l'hypoth\`ese qu'il existe un ouvert $X^\circ \subseteq X$ lisse, surjectif et g\'en\'eriquement projectif \`a fibres connexes au-dessus de $\mathbb{A}^1_k$, de fa\c{c}on que le morphisme $X^\circ \to \mathbb{A}^1_k$ admette une section sur $k^{\mathrm{ab}}$ et que $\mathrm{Br}(X_\eta)/\mathrm{Br}(\eta)$ soit engendr\'e par les images des groupes suivants:
\begin{itemize}
\item le groupe $\mathrm{Br}(X^\circ)$, et
\item le groupe $\mathrm{Ker}[\mathrm{Br}(X_\eta) \to \mathrm{Br}(X_\eta \otimes_k k^\mathrm{ab})]$.
\end{itemize}

Admettons l'hypoth\`ese de Schinzel. Alors $X(k)$ est dense dans $X(\mathbf{A}_k)^{\mathrm{Br}(X)}$. \end{theo}

Rappelons qu'une extension de corps $K/k$ \emph{quasi-d\'eploie} le tore $T$ si le $K$-tore $T \otimes_k K$ est quasi-trivial, i.e. produit de facteurs de la forme $R_{L/K}\mathbb{G}_{m,L}$, pour $L/K$ une extension finie (et variable). 

\begin{rema} \label{addcomb} Si $k = \mathbf{Q}$ et si toutes les fibres non-scind\'ees de $f$ -- voir \cite[\S 1]{Sko} pour cette notion -- sont d\'efinies sur $\mathbf{Q}$, on peut d\'emontrer le Th\'eor\`eme \ref{principal} de fa\c{c}on inconditionnelle, c'est-\`a-dire sans admettre l'hypoth\`ese de Schinzel. On verra ci-dessous que cela r\'esulte des travaux r\'ecents de Green, Tao et Ziegler en combinatoire additive, dans la forme pr\'esent\'ee dans \cite{HSW}. Au \S 5, on \'etablira l'analogue inconditionnel du Th\'eor\`eme \ref{principal} pour les z\'ero-cycles de degr\'e $1$ (voir Th\'eor\`eme \ref{0cyc}). \end{rema}
 
L'hypoth\`ese faite sur le groupe de Brauer de la fibre g\'en\'erique dans l'\'enonc\'e principal est quelque peu technique; il n'est pas \'evident de la v\'erifier dans des cas concrets. Or, il se trouve que cette hypoth\`ese est toujours satisfaite si on suppose qu'il existe une extension \emph{cyclique} de $k$ qui quasi-d\'eploie le tore $T$. Ceci donne le corollaire suivant, qui sera d\'emontr\'e \`a la fin du \S 4:

\begin{coro} \label{coroll} Soit $T$ un $k$-tore qui est quasi-d\'eploy\'e par une extension cyclique. Soit $f: X \to \mathbb{P}^1_k$ une fibration en torseurs sous $T$. Admettons l'hypoth\`ese de Schinzel. Alors $X(k)$ est dense dans $X(\mathbf{A}_k)^{\mathrm{Br}(X)}$.
\end{coro}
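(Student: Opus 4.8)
Pour d\'emontrer ce corollaire, mon plan est de v\'erifier que les hypoth\`eses du Th\'eor\`eme~\ref{principal} sont satisfaites. Soit $K/k$ une extension cyclique quasi-d\'eployant $T$. Comme $K/k$ est ab\'elienne, $K \subseteq k^{\mathrm{ab}}$, donc $T$ est quasi-d\'eploy\'e par $k^{\mathrm{ab}}$; la premi\`ere partie du Th\'eor\`eme~\ref{principal} donne d\'ej\`a que $\mathrm{Br}(X_\eta)/\mathrm{Br}(\eta)$ est ab\'elien fini. Il reste \`a exhiber un ouvert $X^\circ$ comme dans l'\'enonc\'e, muni d'une section sur $k^{\mathrm{ab}}$, tel que $\mathrm{Br}(X_\eta)/\mathrm{Br}(\eta)$ soit engendr\'e par les images des deux sous-groupes indiqu\'es.

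Pour la section, j'utiliserais que le $k^{\mathrm{ab}}$-tore $T \otimes_k k^{\mathrm{ab}}$ est quasi-trivial (changement de base d'un tore quasi-trivial), de sorte que $H^1(k^{\mathrm{ab}}(t), T \otimes_k k^{\mathrm{ab}}) = 0$ (lemme de Shapiro et th\'eor\`eme~90 de Hilbert); la fibre g\'en\'erique $X_\eta$ poss\`ede alors un point rationnel sur $k^{\mathrm{ab}}(t)$. En \'etalant ce point sur un ouvert dense de $\mathbb{A}^1_{k^{\mathrm{ab}}}$, puis en le prolongeant aux (en nombre fini) points restants gr\^ace au crit\`ere valuatif de propret\'e appliqu\'e \`a $f^{-1}(\mathbb{A}^1_k) \to \mathbb{A}^1_k$ (les anneaux locaux de $\mathbb{A}^1$ \'etant des anneaux de valuation discr\`ete), j'obtiendrais une section de ce morphisme, d\'efinie sur une sous-extension finie de $k^{\mathrm{ab}}/k$. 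En prenant pour $X^\circ$ un ouvert convenable de $f^{-1}(\mathbb{A}^1_k)$, d\'efini sur $k$, contenant les images de cette section et de ses conjugu\'es (construction g\'eom\'etrique de routine, \`a mener comme dans la d\'emonstration du Th\'eor\`eme~\ref{principal}), on dispose d'un $X^\circ$ lisse, surjectif et g\'en\'eriquement projectif \`a fibres connexes au-dessus de $\mathbb{A}^1_k$, admettant une section sur $k^{\mathrm{ab}}$.

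Pour l'hypoth\`ese sur le groupe de Brauer, je montrerais que $\mathrm{Br}(X_\eta)/\mathrm{Br}(\eta)$ est engendr\'e par l'image du seul sous-groupe $B := \mathrm{Ker}[\mathrm{Br}(X_\eta) \to \mathrm{Br}(X_\eta \otimes_k k^{\mathrm{ab}})]$. Comme $T \otimes_k K$ est quasi-trivial, le torseur $X_\eta \otimes_{k(t)} K(t)$ est trivial et $K(t)$-rationnel (un tore quasi-trivial est un ouvert d'un espace affine), d'o\`u $\mathrm{Br}(X_\eta \otimes_{k(t)} K(t)) = \mathrm{Br}(K(t))$ par invariance birationnelle du groupe de Brauer des $K(t)$-vari\'et\'es projectives et lisses. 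Soit $\alpha \in \mathrm{Br}(X_\eta)$: il existe donc $\beta \in \mathrm{Br}(K(t))$ avec $\alpha_{K(t)} = f^\ast\beta$, unique et invariant sous $G := \mathrm{Gal}(K/k)$ (car $\alpha$ est rationnel sur $k(t)$ et $X_\eta \otimes_{k(t)} K(t)$ poss\`ede un $K(t)$-point). L'ingr\'edient de fond est que $G$ \'etant cyclique, dans la suite spectrale de Hochschild--Serre $H^p(G, H^q(K(t), \mathbb{G}_m)) \Rightarrow H^{p+q}(k(t), \mathbb{G}_m)$ on a $H^1(K(t), \mathbb{G}_m) = \mathrm{Pic}(K(t)) = 0$, de sorte que la seule diff\'erentielle atteignant ou quittant $E_2^{0,2} = \mathrm{Br}(K(t))^G$ est $d_3 : E_3^{0,2} \to H^3(G, K(t)^\times)$; or $H^3(G, K(t)^\times) \cong H^1(G, K(t)^\times) = 0$ par p\'eriodicit\'e de la cohomologie d'un groupe cyclique et Hilbert~90, donc $d_3 = 0$ et la restriction $\mathrm{Br}(k(t)) \to \mathrm{Br}(K(t))^G$ est surjective. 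Ainsi $\beta = \gamma_{K(t)}$ avec $\gamma \in \mathrm{Br}(k(t)) = \mathrm{Br}(\eta)$, et $\alpha - f^\ast\gamma$ s'annule sur $K(t)$, donc sur $k^{\mathrm{ab}}(t)$ puisque $K \subseteq k^{\mathrm{ab}}$, c'est-\`a-dire $\alpha - f^\ast\gamma \in B$. On conclut $\mathrm{Br}(X_\eta) = \mathrm{Br}(\eta) + B$, l'hypoth\`ese du Th\'eor\`eme~\ref{principal} est remplie, et le th\'eor\`eme fournit la conclusion.

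S'agissant d'un corollaire, il n'y a pas de v\'eritable obstacle: le c\oe ur de l'argument est la surjectivit\'e de $\mathrm{Br}(k(t)) \to \mathrm{Br}(K(t))^G$, cons\'equence directe de la p\'eriodicit\'e en cohomologie cyclique. La partie la plus fastidieuse sera la v\'erification que l'on peut choisir l'ouvert $X^\circ$ avec toutes les propri\'et\'es requises tout en y faisant passer la section construite sur $k^{\mathrm{ab}}$, mais elle ne rec\`ele aucune difficult\'e conceptuelle.
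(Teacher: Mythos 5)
Votre d\'emonstration est correcte et suit la m\^eme strat\'egie de r\'eduction que le texte: on se ram\`ene au Th\'eor\`eme~\ref{principal} en v\'erifiant que $\mathrm{Ker}[\mathrm{Br}(X_\eta) \to \mathrm{Br}(X_\eta \otimes_k L)]$ engendre \`a lui seul $\mathrm{Br}(X_\eta)/\mathrm{Br}(\eta)$, o\`u $L/k$ est l'extension cyclique qui quasi-d\'eploie $T$, et en prenant pour $X^\circ$ un ouvert convenable admettant une section sur~$L$.

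La diff\'erence avec le texte r\'eside dans la preuve de ce point clef. Le papier se contente d'invoquer \cite[Proposition~1.1.(b)]{Colliot}, appliqu\'ee \`a $X_\eta$, qui devient $L(t)$-rationnelle puisque le torseur sous $T \otimes_k L$ quasi-trivial est trivial. Vous donnez au contraire une d\'emonstration directe de cette affirmation: apr\`es avoir identifi\'e $\mathrm{Br}(X_\eta \otimes_{k(t)} K(t))$ \`a $\mathrm{Br}(K(t))$ par invariance birationnelle, vous ramenez tout \`a la surjectivit\'e de la restriction $\mathrm{Br}(k(t)) \to \mathrm{Br}(K(t))^G$, que vous \'etablissez via la suite spectrale de Hochschild--Serre en observant que $E_2^{p,1}=0$ (Hilbert~90), que la seule obstruction est $d_3 : E_3^{0,2} \to H^3(G, K(t)^\times)$, et que ce dernier groupe est nul par p\'eriodicit\'e de la cohomologie d'un groupe cyclique jointe \`a Hilbert~90. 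C'est pr\'ecis\'ement le c\oe ur de l'argument sur lequel repose la r\'ef\'erence de Colliot-Th\'el\`ene: votre preuve est donc plus autonome, au prix d'\^etre plus longue. Les deux approches sont \'equivalentes; notons seulement que pour la construction de $X^\circ$ avec sa section, le texte renvoie simplement \`a la discussion pr\'ec\'edant le Lemme~\ref{brconstant}, ce qui est plus \'economique que votre version d\'etaill\'ee mais de m\^eme nature.
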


En particulier:

\begin{coro} Soit $X$ un mod\`ele projectif et lisse de la $k$-vari\'et\'e affine d\'efinie par $$\prod_{i = 1}^n N_{K_i/k}(\mathbf{x}_i) = P(t) \neq 0,$$ o\`u $P(t) \in k[t]$ est quelconque et les corps $K_1,\cdots,K_n$ sont des extensions finies du corps de nombres $k$ dont au moins une est une extension \emph{cyclique}. Admettons l'hypoth\`ese de Schinzel. Alors $X(k)$ est dense dans $X(\mathbf{A}_k)^{\mathrm{Br}(X)}$. \end{coro}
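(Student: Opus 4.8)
Le plan est de d\'eduire l'\'enonc\'e du Corollaire~\ref{coroll}. La premi\`ere \'etape consiste \`a reconna\^itre la fibration en torseurs sous-jacente. Posons $S = \prod_{i=1}^n R_{K_i/k}\mathbb{G}_m$ et notons $N \colon S \to \mathbb{G}_m$ le morphisme ``produit des normes'', $(\mathbf{x}_i)_i \mapsto \prod_i N_{K_i/k}(\mathbf{x}_i)$; c'est un morphisme surjectif de $k$-tores (la norme $N_{K_1/k}$ \'etant d\'ej\`a surjective), dont je noterai $T$ le noyau. Sur $\eta = k(t)$, la vari\'et\'e affine $\prod_i N_{K_i/k}(\mathbf{x}_i) = P(t)$ n'est autre que la fibre de $N \otimes_k \eta$ au-dessus du point $P(t) \in \mathbb{G}_m(\eta) = \eta^\times$; c'est donc un espace principal homog\`ene sous $T \otimes_k \eta$. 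En prenant un mod\`ele projectif et lisse $X$ de la vari\'et\'e affine, muni d'un morphisme $f \colon X \to \mathbb{P}^1_k$ prolongeant la projection sur la coordonn\'ee $t$, on obtient une fibration en torseurs sous $T$ au sens de la d\'efinition, la fibre g\'en\'erique \'etant une compactification lisse de cet espace principal homog\`ene. La propri\'et\'e \`a d\'emontrer \'etant un invariant birationnel des $k$-vari\'et\'es projectives et lisses, on peut se contenter d'un tel mod\`ele particulier.

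La deuxi\`eme \'etape, qui est le c\oe ur du raisonnement, est de v\'erifier que $T$ est quasi-d\'eploy\'e par une extension cyclique. Quitte \`a renum\'eroter, supposons $K_1/k$ cyclique de degr\'e $d_1$; je vais montrer que $T$ est d\'ej\`a quasi-d\'eploy\'e par $K_1$. En dualisant la suite exacte $1 \to T \to S \xrightarrow{N} \mathbb{G}_m \to 1$, on obtient une suite exacte de $G$-modules, o\`u $G = \mathrm{Gal}(\overline{k}/k)$ et $G_i = \mathrm{Gal}(\overline{k}/K_i)$:
\[ 0 \longrightarrow \mathbb{Z} \stackrel{\varphi}{\longrightarrow} \bigoplus_{i=1}^n \mathbb{Z}[G/G_i] \longrightarrow \widehat{T} \longrightarrow 0, \qquad \varphi(1) = (\mathcal{N}_1, \dots, \mathcal{N}_n), \]
$\mathcal{N}_i \in \mathbb{Z}[G/G_i]$ d\'esignant la somme de toutes les classes. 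On restreint cette suite au sous-groupe $G_1$. Comme $K_1/k$ est galoisienne, $G_1$ est distingu\'e dans $G$, donc agit trivialement sur $G/G_1$: ainsi $\mathbb{Z}[G/G_1]|_{G_1}$ est le module libre de rang $d_1$ muni de l'action triviale, et $\mathcal{N}_1$ y correspond au vecteur $(1,\dots,1)$. Pour $i \geq 2$, $\mathbb{Z}[G/G_i]|_{G_1}$ est un $G_1$-module de permutation $M_i$, et $\mathcal{N}_i$ y reste $G_1$-invariant. Le vecteur $(1,\dots,1)$ \'etant primitif, la droite qu'il engendre est un facteur direct de $\mathbb{Z}^{d_1}$, de suppl\'ementaire un module libre de rang $d_1-1$ muni de l'action triviale; un changement de base \'el\'ementaire absorbant les composantes $\mathcal{N}_i$ ($i \geq 2$) montre alors que $\widehat{T}|_{G_1}$ est isomorphe \`a $\mathbb{Z}^{d_1-1} \oplus \bigoplus_{i\geq 2} M_i$. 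C'est un $G_1$-module de permutation, donc $T \otimes_k K_1$ est quasi-trivial: $T$ est quasi-d\'eploy\'e par l'extension cyclique $K_1/k$.

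Il ne reste qu'\`a appliquer le Corollaire~\ref{coroll} \`a la fibration en torseurs $f \colon X \to \mathbb{P}^1_k$ sous le tore $T$: sous l'hypoth\`ese de Schinzel, $X(k)$ est dense dans $X(\mathbf{A}_k)^{\mathrm{Br}(X)}$. L'obstacle principal n'est pas conceptuel---tout le contenu arithm\'etique est concentr\'e dans le Corollaire~\ref{coroll} (donc dans le Th\'eor\`eme~\ref{principal}), utilis\'e ici comme bo\^ite noire---mais r\'eside dans le calcul de modules galoisiens de la deuxi\`eme \'etape, qu'il faut mener soigneusement. On notera que c'est pr\'ecis\'ement l\`a qu'intervient de fa\c{c}on essentielle l'hypoth\`ese qu'\emph{au moins une} des extensions $K_i$ soit cyclique: il suffirait en fait que $K_1/k$ soit galoisienne pour que $T$ soit quasi-d\'eploy\'e par $K_1$, mais la cyclicit\'e est indispensable pour pouvoir invoquer le Corollaire~\ref{coroll}.
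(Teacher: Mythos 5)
Votre preuve est correcte et suit exactement la voie que le texte sous-entend : le papier pr\'esente ce corollaire comme une cons\'equence imm\'ediate du Corollaire~\ref{coroll} (la mention \og En particulier \fg) sans en d\'etailler la v\'erification, et vous fournissez pr\'ecis\'ement le calcul manquant. L'identification du tore $T$ comme noyau du morphisme produit des normes, la suite exacte duale $0 \to \mathbb{Z} \to \bigoplus_i \mathbb{Z}[G/G_i] \to \widehat{T} \to 0$, la restriction \`a $G_1$ et le changement de base absorbant les $\mathcal{N}_i$ pour $i \geq 2$ sont tous corrects, et la remarque finale (la galoisianit\'e de $K_1/k$ suffit pour le quasi-d\'eploiement, la cyclicit\'e n'\'etant requise que pour invoquer le Corollaire~\ref{coroll}) est juste.
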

  
  Si $k = \mathbf{Q}$ et si le polyn\^ome $P(t)$ a toutes ses racines rationnelles, le r\'esultat ci-dessus est inconditionnel, comme on l'a remarqu\'e plus haut.

\smallskip

Finalement, au \S 6 (qui est ind\'ependant du reste du texte), on fait une remarque sur l'hypoth\`ese de Schinzel qui nous permet de l'utiliser de fa\c{c}on plus souple. Ceci m\`enera \`a une g\'en\'eralisation de \cite[Theorem 1.1.e]{CTSkoSD98}.

\subsection{Remerciements} Je voudrais r\'emercier Jean-Louis Colliot-Th\'el\`ene pour son aide, ses encouragements et sa patience pendant l'\'elaboration de ce texte. Je remercie \'egalement Olivier Wittenberg pour des discussions utiles, notamment sur la Proposition \ref{HilbertSchinzel}.
 
L'auteur est aspirant du FWO (Fonds de la Recherche Scientifique - Flandres). Pendant la pr\'eparation de ce texte, il a \'egalement b\'en\'efici\'e du soutien du Centre Interfacultaire Bernoulli (EPFL) et du Hausdorff  Institute for Mathematics (Bonn).

\section{G\'en\'eralit\'es sur les fibrations en torseurs sous un tore}

Soit $k$ un corps de caract\'eristique z\'ero $k$ et soit $T$ un $k$-tore quelconque. Choisissons une compactification lisse et \'equivariante $T^c$ de $T$. 

Soit $f: X \to \mathbb{P}^1_k$ une fibration en torseurs sous le tore $T$. Il existe alors un ouvert $U \subseteq \mathbb{P}^1_k$ et un ouvert $V \subseteq X_U = f^{-1}(U)$ de fa\c{c}on que $V$ soit un espace principal homog\`ene sur $U$ sous $T$. Le produit contract\'e $V \times^T T^c$ est alors une $k$-vari\'et\'e projective et lisse au-dessus de $U$ qui contient $V$ comme ouvert. Des arguments classiques impliquent que le morphisme $V \times^T T^c \to U$ s'\'etend en un morphisme $g: Y \to \mathbb{P}^1_k$ qui satisfait $g^{-1}(U) = V \times^T T^c$, o\`u $Y$ est une $k$-vari\'et\'e projective et lisse $k$-birationnelle \`a $X$. 

Or, les \'enonc\'es que l'on veut \'etablir dans ce texte ne d\'ependent que de la classe d'\'equivalence birationnelle de la vari\'et\'e projective et lisse $X$. On suppose donc jusqu'\`a la fin du \S 3 que la $k$-vari\'et\'e $X$ est une fibration en torseurs sous $T$ telle qu'au-dessus d'un ouvert convenable $U \subseteq \mathbb{P}^1_k$, l'image inverse $X_U = f^{-1}(U)$ est de la forme $V \times^T T^c$, o\`u $V$ est un $U$-torseur sous $T$ ($\star$).

\begin{lemm} \label{spec} Soit $k$ un corps de nombres. Pour tout $Q \in U(k)$, on dispose d'une fl\`eche de sp\'ecialisation $$\mathrm{Br}(X_\eta)/\mathrm{Br}(\eta) \longrightarrow \mathrm{Br}(X_Q)/\mathrm{Br}(k)$$ qui est un isomorphisme. \end{lemm}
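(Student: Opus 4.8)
The plan is to produce the specialization map geometrically and then establish the isomorphism by an argument that is really about the structure of $X_\eta$ as a smooth compactification of a torsor under $T\otimes_k\eta$. First I would recall the general construction: since $f$ is smooth over $U$ and $X_\eta$ is geometrically integral, for each $Q\in U(k)$ there is a specialization homomorphism $\mathrm{Br}(X_U)\to\mathrm{Br}(X_Q)$ obtained by pullback along the closed immersion $X_Q\hookrightarrow X_U$; since $\mathrm{Br}(k)\to\mathrm{Br}(X_Q)$ and $\mathrm{Br}(\eta)\to\mathrm{Br}(X_U)\to\mathrm{Br}(X_\eta)$ are compatible, one gets the claimed map on quotients $\mathrm{Br}(X_\eta)/\mathrm{Br}(\eta)\to\mathrm{Br}(X_Q)/\mathrm{Br}(k)$ once one checks that the image of $\mathrm{Br}(X_U)$ generates $\mathrm{Br}(X_\eta)/\mathrm{Br}(\eta)$ (shrinking $U$ if necessary, which is harmless by the birational invariance remarked in hypothesis~$(\star)$). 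Here I would invoke purity/the Grothendieck–Brauer exact sequence for the regular scheme $X_U$ to see $\mathrm{Br}(X_\eta)$ is the colimit of $\mathrm{Br}(X_{U'})$ over $U'\subseteq U$.

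The substance is injectivity and surjectivity of this map. The clean way is to use the description of $\mathrm{Br}(X_\eta)/\mathrm{Br}(\eta)$ for a smooth compactification of a torsor under a torus: by Colliot-Th\'el\`ene–Sansuc this quotient is canonically a subquotient of group cohomology of the character module $\widehat{T}$ — concretely it injects into $H^1(\eta,\operatorname{Pic}\overline{X_\eta})$ and the geometric Picard group $\operatorname{Pic}\overline{X_\eta}$ is a fixed finitely generated module built functorially from the chosen equivariant compactification $T^c$, hence constant in the family $X_U\to U$ (this is where $(\star)$, i.e. the fact that $X_U=V\times^T T^c$ with $V$ a torsor under the constant tore $T$, is essential). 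Thus $\operatorname{Pic}\overline{X_Q}\cong\operatorname{Pic}\overline{X_\eta}$ compatibly with Galois action via $\mathrm{Gal}(\overline{k}/k)\twoheadrightarrow \mathrm{Gal}$-image, and the Hochschild–Serre spectral sequences for $X_U\to U$ and $X_Q\to\operatorname{Spec} k$ fit into a commutative diagram. Chasing that diagram reduces the statement to the corresponding comparison for $H^1(\eta,\operatorname{Pic}\overline{X_\eta})\to H^1(k,\operatorname{Pic}\overline{X_Q})$ together with control of the transcendental/algebraic pieces; since $\operatorname{Pic}\overline{X_Q}$ is geometrically nothing but $\operatorname{Pic}\overline{X_\eta}$ and $X_Q$ again a smooth compactification of a torsor under $T$, both sides are described by the \emph{same} torus $T$, and the map is an isomorphism.

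Alternatively — and this may be the slicker route — I would simply note that $\mathrm{Br}(X_\eta)/\mathrm{Br}(\eta)$ depends only on $T$ and the compactification datum, not on the torsor class of $V$: for a smooth compactification $T^c$ of $T$ over any field extension $L/k$, there is a canonical identification of $\mathrm{Br}(T^c_L)/\mathrm{Br}(L)$ (and likewise for torsors) that is compatible with base change $L\to L'$. Applying this to $L=\eta=k(t)$ and to $L=k$ via the residue at $Q$ gives both the map and its inverse. The main obstacle I anticipate is bookkeeping: one must check carefully that the specialization at a $k$-point $Q\in U(k)$ of a torsor under $T$ over $k(t)$ is again a torsor under $T$ over $k$ with a \emph{compatible} compactification (so that the invariant-theoretic description of the Brauer quotient is literally the same object before and after specializing), and that the isomorphism is independent of auxiliary choices. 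Once that compatibility is in place, injectivity follows because a class dying at $X_Q$ must, by the geometric constancy of $\operatorname{Pic}$, already be algebraic and come from $\mathrm{Br}(k)$ hence from $\mathrm{Br}(\eta)$; surjectivity follows symmetrically. I would expect to cite \cite{CTSan87a} for the cohomological description of the Brauer group of such compactifications and for the behaviour of this description under field extension.
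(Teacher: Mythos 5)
Your first approach is essentially the paper's argument: reduce via Hochschild--Serre to the comparison of $H^1(\mathrm{Gal}, \mathrm{Pic})$ on both sides, using that the geometric Picard group is ``constant in the family''. The paper makes this precise by working over the discrete valuation ring $R=\overline{k}[t]_{(t-\lambda)}$ and citing Harari's specialization lemma \cite[Lemme 3.1.1]{Har} for $\mathrm{Pic}(X_R)\cong\mathrm{Pic}(X_{\overline{\eta}})$, then \cite[Lemme 2.1]{CTHarSko03} for Galois-equivariance of the further restriction to $\mathrm{Pic}(\overline{X_Q})$; your appeal to the structure $(\star)$, i.e.\ $X_U = V\times^T T^c$ with $V$ trivialized over $\overline{k}$, is a legitimate (arguably more direct) way to see the same geometric constancy, though you still need to justify why the resulting identification is compatible with Galois actions, which is the part the paper delegates to the cited lemmas. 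Two points you gloss over that the paper makes explicit and that you should not omit: (i) the vanishing of the geometric Brauer groups (Tsen, since $X_{\overline{\eta}}$ and $\overline{X_Q}$ are rational) so that $\mathrm{Br}/\mathrm{Br}_0$ is indeed $H^1(\mathrm{Gal},\mathrm{Pic})$ on both sides; and (ii) the vanishing $H^3(k(t),\mathbb{G}_m)=0$ for $k$ a number field, which is needed on the generic-fibre side of the Hochschild--Serre sequence and is not automatic. Your ``slicker route'' (that $\mathrm{Br}(X_\eta)/\mathrm{Br}(\eta)$ depends only on $T$ and $T^c$, so one just identifies the two sides abstractly) is in spirit the same computation, but as you yourself anticipate, it does not by itself produce the \emph{specialization} map nor show that the abstract identification agrees with it, so it is not a shortcut around the diagram chase.
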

\begin{proof}[D\'emonstration] On fixe un point \`a l'infini $\infty \in \mathbf{P}^1_k$. Soit $t$ une coordonn\'ee affine sur $\mathrm{Spec}\,k[t] = \mathbf{A}^1_k = \mathbf{P}^1_k \setminus \{\infty\}$. Soit $R = \mathrm{Spec}\,\overline{k}[t]_{(t - \lambda)}$, o\`u $\lambda$ est la $t$-coordonn\'ee de $Q \in U(k)$. Consid\'erons le $R$-sch\'ema $X_R$; sa fibre g\'en\'erique est $X_\eta \otimes_k \overline{k}$. Par \cite[Lemme 3.1.1]{Har}, on a un isomorphisme $$\mathrm{Pic}(X_R) \cong \mathrm{Pic}(X_\eta \otimes_k \overline{k}).$$ La fibre sp\'eciale de $X_R$ est $\overline{X_Q}$; on obtient donc une fl\`eche $$\mathrm{Pic}(X_\eta \otimes_k \overline{k}) \to \mathrm{Pic}(\overline{X_Q})$$ qui est un isomorphisme Galois-\'equivariant \cite[Lemme 2.1]{CTHarSko03}. Il en r\'esulte que $$H^1(k,\mathrm{Pic}(X_\eta \otimes_k \overline{k})) \cong H^1(k,\mathrm{Pic}(\overline{X}_Q)) \cong \mathrm{Br}(X_Q)/\mathrm{Br}(k).$$ Pour finir la preuve, il suffit maintenant de remarquer que $$H^1(k,\mathrm{Pic}(X_\eta \otimes_k \overline{k})) \cong H^1(\eta,\mathrm{Pic}(X_{\overline{\eta}})) \cong \mathrm{Br}(X_\eta)/\mathrm{Br}(\eta).$$ (Notons que ce dernier \'enonc\'e  utilise $H^3(k(t),\mathbb{G}_m) = 0$, pour $k$ un corps de nombres.) \end{proof}

La proposition suivante se montre utile pour l'\'etude des fibres non scind\'ees d'une fibration en torseurs sous un tore. Il implique en particulier que si le $k$-tore $T$ est d\'eploy\'e (ou quasi-d\'eploy\'e) par une extension ab\'elienne de $k$, alors la fibration satisfait \`a la condition d'ab\'elianit\'e de \cite[Theorem 1.1]{CTSkoSD98}. Pour la d\'efinition de tores flasques, on renvoie \`a \cite[Definition 1.2]{CTSan87b}.

\begin{prop} \label{inutile} On suppose qu'il existe une extension finie $K$ du corps $k$ telle que le $K$-tore $T \otimes_k K$ soit flasque. Soit $Q \in \mathbb{P}^1_k$ un point ferm\'e de corps r\'esiduel $L$. 

Alors $X_Q$ admet une composante irr\'eductible $Y$ de multiplicit\'e $1$ telle que la fermeture alg\'ebrique de $L$ dans le corps de fonctions $L(Y)$ soit un sous-corps d'un corps qui est facteur direct de $K \otimes_k L$.  \end{prop}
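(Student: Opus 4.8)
Le plan est de se ramener \`a une situation locale au-dessus de $Q$, puis d'utiliser qu'un torseur sous un tore \emph{flasque} sur le corps des fractions d'un anneau de valuation discr\`ete hens\'elien s'\'etend en un torseur sur l'anneau lui-m\^eme. Comme les composantes de $X_Q$, leurs multiplicit\'es et la fermeture alg\'ebrique de $L$ dans leurs corps de fonctions ne changent pas apr\`es hens\'elisation, on remplace $\mathbb{P}^1_k$ par $\mathrm{Spec}\,A$, o\`u $A$ d\'esigne l'hens\'elis\'e de $\mathcal{O}_{\mathbb{P}^1_k,Q}$ : c'est un anneau de valuation discr\`ete hens\'elien de corps r\'esiduel $L$ et de corps des fractions $F$. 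La $L$-alg\`ebre \'etale $K \otimes_k L$ s'\'ecrit $\prod_j M_j$, et alors $A \otimes_k K = \prod_j A_j$, o\`u chaque $A_j$ est un anneau de valuation discr\`ete hens\'elien de corps r\'esiduel $M_j$ et de corps des fractions $F_j$. Chaque $M_j$ contenant un plongement de $K$ dans $\overline{k}$, le groupe $\mathrm{Gal}(\overline{k}/M_j)$ est contenu dans un conjugu\'e de $\mathrm{Gal}(\overline{k}/K)$ ; la restriction d'un module flasque \`a un sous-groupe \'etant flasque, le tore constant $T \otimes_k M_j$ sur $A_j$ est donc flasque.

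Vu l'hypoth\`ese ($\star$), la fibre g\'en\'erique de $X$ au-dessus de $\mathrm{Spec}\,A$ est $X_\eta = V_\eta \times^{T} T^c$, o\`u $V_\eta$ est la restriction du torseur $V$ au point g\'en\'erique. Apr\`es changement de base, $V_\eta \otimes_F F_j$ est un torseur sous le tore \emph{flasque} $T \otimes_k M_j$ sur le corps des fractions de l'anneau de valuation discr\`ete hens\'elien $A_j$ ; d'apr\`es un th\'eor\`eme de Colliot-Th\'el\`ene et Sansuc sur les tores flasques (voir \cite{CTSan87b}), il provient d'un $A_j$-torseur $\mathcal{V}_j$ sous $T$. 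D\`es lors $\mathcal{V}_j \times^{T} T^c$ est un $A_j$-sch\'ema propre, lisse et r\'egulier, \`a fibres g\'eom\'etriquement int\`egres (car $T^c$ est propre, lisse et g\'eom\'etriquement int\`egre), de fibre g\'en\'erique $X_\eta \otimes_F F_j$. Comme $X \times_{\mathbb{P}^1_k} \mathrm{Spec}\,A_j$ est \'egalement un $A_j$-sch\'ema propre et r\'egulier de m\^eme fibre g\'en\'erique, ces deux sch\'emas sont birationnels au-dessus de $\mathrm{Spec}\,A_j$ ; le premier ayant une fibre sp\'eciale g\'eom\'etriquement int\`egre, l'invariance birationnelle des fibres scind\'ees (voir \cite{Sko}) entra\^{\i}ne que la fibre sp\'eciale du second, \`a savoir $X_Q \otimes_L M_j$, poss\`ede une composante $Z_j$, de multiplicit\'e $1$, g\'eom\'etriquement int\`egre sur $M_j$.

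Il reste \`a redescendre \`a $L$. Le morphisme fini et surjectif $X_Q \otimes_L M_j \to X_Q$ envoie $Z_j$ sur une composante irr\'eductible $Y$ de $X_Q$ ; comme $M_j/L$ est s\'eparable et que $X_Q$ est un diviseur de Cartier dans le mod\`ele r\'egulier, ce changement de base pr\'eserve les multiplicit\'es, de sorte que $Y$ est de multiplicit\'e $1$ dans $X_Q$. Soit $L'$ la fermeture alg\'ebrique de $L$ dans $L(Y)$ : les composantes de $Y \otimes_L M_j$ sont index\'ees par les orbites de $\mathrm{Gal}(\overline{L}/M_j)$ sur les composantes g\'eom\'etriques de $Y$, et une telle composante est g\'eom\'etriquement int\`egre sur $M_j$ si et seulement si $\mathrm{Gal}(\overline{L}/M_j)$ stabilise une composante g\'eom\'etrique de $Y$, c'est-\`a-dire si et seulement si $L'$ se plonge dans $M_j$ au-dessus de $L$. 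Puisque $Z_j$ est une composante de $Y \otimes_L M_j$ g\'eom\'etriquement int\`egre, $L'$ est un sous-corps du facteur direct $M_j$ de $K \otimes_k L$, comme voulu. L'obstacle principal est l'emploi, correctement r\'ef\'erenc\'e, du r\'esultat de Colliot-Th\'el\`ene et Sansuc sur l'extension des torseurs sous un tore flasque au-dessus d'un anneau local r\'egulier (annulation de la ramification) : c'est le seul endroit o\`u l'on utilise r\'eellement que $T$ devient flasque apr\`es extension des scalaires \`a $K$. L'invariance birationnelle des fibres scind\'ees et le calcul des corps de constantes sont standard.
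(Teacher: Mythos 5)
Your proof is correct and follows essentially the same route as the paper's: extend the torsor under the (flasque, after base change to $K$) torus using Colliot-Th\'el\`ene--Sansuc's theorem on torsors under flasque tori over regular schemes, then invoke birational invariance of split fibres to produce a geometrically integral multiplicity-one component over the extension, and finally descend. The paper extends the torsor globally over $\mathbb{P}^1_K$ and cites a single proposition from Colliot-Th\'el\`ene's CIME notes that packages the steps you carry out explicitly via henselisation at $Q$, but the underlying ideas are identical.
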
 

\begin{proof}[D\'emonstration] La $K$-vari\'et\'e $X_U \otimes_k K$ est un torseur sur $U \otimes_k K$ sous $T \otimes_k K$ et s'\'etend en un torseur sur $\mathbb{P}^1_K$ tout entier gr\^ace \`a \cite[Theorem 2.2.(i)]{CTSan87b}. Si $R$ est un point ferm\'e de $\mathbb{P}^1_K$ au-dessus de $Q$, alors \cite[Proposition 3.4.(b)]{CIME} implique que la fibre $(X \otimes_k K)_R$ a une composante g\'eom\'etriquement int\`egre de multiplicit\'e $1$. On en d\'eduit facilement le r\'esultat voulu. \end{proof}

Soit $X^\circ \subseteq X$ un ouvert lisse, surjectif et g\'en\'eriquement projectif \`a fibres connexes sur la droite affine $\mathbb{A}^1_k$, tel que le morphisme induit $X^\circ \to \mathbb{A}^1_k$ admet une section sur $\overline{k}$. Un tel ouvert $X^\circ$ existe toujours: si $L$ est une extension de $k$ qui d\'eploie le tore $T$, alors la fibre g\'en\'erique $X_\eta \otimes_k L$ est un torseur sous le tore d\'eploy\'e $(T \times_k \eta) \otimes_k L$. Comme tout espace principal homog\`ene sous un tore trivial est trivial, on voit que $X_\eta \otimes_k L$ admet un $(\eta \otimes_k L)$-point qui s'\'etend en une section $\mathbb{P}^1_L \to X \otimes_k L$ dont l'image est contenue dans le lieu lisse de $X$. Pour $X^\circ$ on prend alors un ouvert appropri\'e -- c'est-\`a-dire lisse, surjectif et g\'en\'eriquement projectif \`a fibres connexes sur la droite affine $\mathbb{A}^1_k$ -- qui contient l'image de $\mathbb{A}^1_L$ par cette section.

On remarque qu'il suffit (dans le raisonnement ci-dessus) de prendre une extension $L$ qui \emph{quasi}-d\'eploie le tore $T$, comme tout torseur sous un tore quasi-trivial est trivial.

\begin{lemm} \label{brconstant} Soit $K$ une extension finie de $k$ qui quasi-d\'eploie le tore $T$, et telle que le morphisme $p: X^\circ \to \mathbb{A}^1_k$ admet une section $\sigma$ sur $K$. Alors on a $$\mathrm{Br}\,K \stackrel{\sim}{\longrightarrow}\mathrm{Br}\,X^\circ_K.$$\end{lemm}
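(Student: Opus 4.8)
\emph{The plan.} I would prove more precisely that the structural pullback $p^{*}\colon \mathrm{Br}(\mathbb{A}^{1}_{K}) \to \mathrm{Br}(X^{\circ}_{K})$ is an isomorphism; since $K$ is of characteristic z\'ero one has $\mathrm{Br}(\mathbb{A}^{1}_{K}) = \mathrm{Br}(K)$ (Faddeev's exact sequence for $\mathrm{Br}(K(t))$), which then yields the lemma. Injectivity of $p^{*}$ is formal: the section $\sigma$ satisfies $p\circ\sigma = \mathrm{id}$, hence $\sigma^{*}\circ p^{*} = \mathrm{id}$. The whole point is surjectivity, which I would obtain by first controlling the restriction of a class $\alpha \in \mathrm{Br}(X^{\circ}_{K})$ to the generic fibre of $p$, and then propagating the vanishing to all of $X^{\circ}_{K}$ via the injectivity of the Brauer group of a regular integral scheme into that of its function field.

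\emph{The generic fibre.} Since $X^{\circ}$ is g\'en\'eriquement projectif over $\mathbb{A}^{1}_{k}$, there is a dense open $U' \subseteq \mathbb{A}^{1}_{k}$ such that $X^{\circ}|_{U'} \to U'$ is propre. As an open immersion into the $U'$-scheme $X|_{U'}$, which is propre over $U'$, the morphism $X^{\circ}|_{U'} \hookrightarrow X|_{U'}$ is itself propre, hence identifies $X^{\circ}|_{U'}$ with a non-vide clopen subscheme of $X|_{U'}$; but $X|_{U'}$ is irr\'eductible (a dense open of the g\'eom\'etriquement int\`egre $X$), so $X^{\circ}|_{U'} = X|_{U'}$. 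Therefore the generic fibre of $p_{K}\colon X^{\circ}_{K} \to \mathbb{A}^{1}_{K}$ is $Z := X_{\eta}\otimes_{k}K$. Write $F = K(t)$. Because $K$ quasi-d\'eploie $T$, the $F$-tore $T\otimes_{k}F$ is quasi-trivial, so the espace homog\`ene underlying $X_{\eta}\otimes_{k}K$ is trivial and $Z$ is une compactification lisse over $F$ d'un tore quasi-trivial. For such a $Z$ one has $\mathrm{Br}(\overline{Z}) = 0$ and $\mathrm{Pic}(\overline{Z})$ is a facteur direct d'un module de permutation galoisien (Colliot-Th\'el\`ene--Sansuc, \cite{CTSan87b}), whence $H^{1}(F,\mathrm{Pic}(\overline{Z})) = 0$; as $\sigma$ fournit un $F$-point de $Z$, the Hochschild--Serre spectral sequence gives $p_{F}^{*}\colon \mathrm{Br}(F) \stackrel{\sim}{\longrightarrow} \mathrm{Br}(Z)$.

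\emph{Surjectivit\'e.} Given $\alpha \in \mathrm{Br}(X^{\circ}_{K})$, set $\alpha_{1} = \alpha - p_{K}^{*}(\sigma^{*}\alpha)$, so that $\sigma^{*}\alpha_{1} = 0$; it suffices to show $\alpha_{1} = 0$. Restricting to $Z$ and using $p_{F}^{*}\colon \mathrm{Br}(F) \stackrel{\sim}{\longrightarrow} \mathrm{Br}(Z)$, write $\alpha_{1}|_{Z} = p_{F}^{*}\beta$ with $\beta \in \mathrm{Br}(F)$. Applying $\sigma_{F}^{*}$, where $\sigma_{F} \in Z(F)$ is the generic fibre of $\sigma$, gives $\beta = \sigma_{F}^{*}(\alpha_{1}|_{Z})$, which is the image of $\sigma^{*}\alpha_{1} = 0$ under $\mathrm{Br}(\mathbb{A}^{1}_{K}) \to \mathrm{Br}(F)$, hence $\beta = 0$ and $\alpha_{1}|_{Z} = 0$. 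Now $Z$ is r\'egulier et int\`egre with $\kappa(Z) = \kappa(X^{\circ}_{K})$, so $\mathrm{Br}(Z) \hookrightarrow \mathrm{Br}(\kappa(Z))$; thus $\alpha_{1}$ maps to $0$ in $\mathrm{Br}(\kappa(X^{\circ}_{K}))$. Since $X^{\circ}_{K}$ is lisse sur $K$, hence r\'egulier et int\`egre, one has $\mathrm{Br}(X^{\circ}_{K}) \hookrightarrow \mathrm{Br}(\kappa(X^{\circ}_{K}))$, and therefore $\alpha_{1} = 0$, i.e.\ $\alpha = p_{K}^{*}(\sigma^{*}\alpha)$.

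\emph{Main difficulty.} The only non-formal inputs live in the second step: the fact that the g\'en\'eriquement projectif open $X^{\circ}$ actually coincides with $X$ over a dense open of $\mathbb{A}^{1}_{k}$ --- so that the generic fibre is the whole of $X_{\eta}\otimes_{k}K$ rather than a proper open, which in general would carry residual Brauer classes --- together with the computation $\mathrm{Br}(Z) = \mathrm{Br}(F)$ for a smooth compactification of a quasi-trivial torus, i.e.\ the (stable) permutation character of its g\'eom\'etrique Picard group. Granting these, everything else is bookkeeping with the section and with the injectivity of Brauer groups into function fields.
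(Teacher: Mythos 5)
Your proof is correct and follows the same overall strategy as the paper's: cut the problem down to the Brauer group of the generic fibre $Z$ of $p_K$, identify $\mathrm{Br}(Z)$ with $\mathrm{Br}(F)$ where $F = K(t)$, and use the section to transfer the isomorphism back to $\mathrm{Br}(X^\circ_K)$ via the injection of $\mathrm{Br}(X^\circ_K)$ into $\mathrm{Br}$ of its function field. The one place you genuinely diverge is the computation $\mathrm{Br}(Z) = \mathrm{Br}(F)$: you run the Hochschild--Serre spectral sequence, using that $\mathrm{Br}(\overline{Z}) = 0$, that $\mathrm{Pic}(\overline{Z})$ is a direct factor of a permutation module (hence $H^1(F,\mathrm{Pic}(\overline{Z})) = 0$), and that $\sigma$ furnishes an $F$-point so that $\mathrm{Br}(F) \to \mathrm{Br}(Z)$ is injective. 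The paper instead observes that $Z$ is $F$-rational (the torsor trivialises because $T\otimes_k F$ is quasi-trivial) and invokes birational invariance of the Brauer group of smooth projective varieties over a field of characteristic zero, which gives $\mathrm{Br}(Z)=\mathrm{Br}(F)$ in one line. The two arguments are essentially equivalent in content --- your route unwinds what makes the birational-invariance argument work --- but the paper's is shorter. One thing your write-up does more carefully than the paper: you verify explicitly that the generic fibre of $p_K$ is all of $X_\eta\otimes_k K$ rather than a strict open (via properness of $X^\circ|_{U'} \hookrightarrow X|_{U'}$), which the paper takes as implicit in the phrase ``g\'en\'eriquement projectif''. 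Finally, you package the conclusion as surjectivity of $p^*$ via the subtraction $\alpha_1 = \alpha - p_K^*(\sigma^*\alpha)$, whereas the paper packages it as injectivity of $\sigma^*$ via a commutative square; these are the same argument.
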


\begin{proof}[D\'emonstration] La fibre g\'en\'erique $X_\eta^\circ$ du morphisme $X^\circ \to \mathbb{A}^1_k$ est une $\eta$-vari\'et\'e projective, lisse et g\'eom\'etriquement int\`egre. La vari\'et\'e $X_{\eta \otimes_k K}$ est rationnelle sur $\eta \otimes_k K$ -- l'extension $K$ de $k$ quasi-d\'eploie le tore $T$ et tout torseur sous un tore quasi-trivial est trivial. Il en r\'esulte donc que $\mathrm{Br}\,X_{\eta \otimes_k K} = \mathrm{Br}(\eta \otimes_k K)$, car le groupe de Brauer est un invariant birationnel des vari\'et\'es projectives lisses (sur un corps de caract\'eristique z\'ero). D'apr\`es Grothendieck, on dispose donc d'une injection $\mathrm{Br}\, X^\circ_K \hookrightarrow \mathrm{Br}(X_{\eta \otimes_k K})$ qui s'ins\`ere dans le diagramme commutatif suivant:\begin{center} \begin{tikzpicture}
 \matrix (m) [matrix of math nodes, row sep=2em,
    column sep=3em]{
    \mathrm{Br}(X_K^\circ) & \mathrm{Br}(X^\circ_{\eta \otimes_k K})  \\
    \mathrm{Br}(\mathbf{A}^1_K) & \mathrm{Br}(\eta \otimes_k K)\\};
  \path[-stealth]
    (m-1-1) edge node [above] {} (m-1-2) 
    (m-2-2) edge node  [right]  {$\cong$} (m-1-2)
    (m-2-1) edge [bend left] node [left] {$p^\star$} (m-1-1) 
    (m-1-1) edge [bend left]  node [right] {$\sigma^\star$} (m-2-1) 
    (m-2-1) edge node [below] {}(m-2-2) ; 
  \end{tikzpicture} \end{center} 
  
  Il est \'evident que $\sigma^\star \circ p^\star = \mathrm{Id}$; en particulier, l'homomorphisme $\sigma^\star$ est surjectif. La commutativit\'e du diagramme implique que la fl\`eche compos\'ee $$\mathrm{Br}(X_K^\circ) \stackrel{\sigma^\star}{\longrightarrow} \mathrm{Br}(\mathbb{A}^1_K) \longrightarrow \mathrm{Br}(\eta \otimes_k K)$$ est injective. Il en r\'esulte que $\sigma^\star$ est injectif, donc un isomorphisme. 
  
  Comme $\mathrm{Br}(\mathbb{A}^1_K) = \mathrm{Br}(K)$, ceci permet de conclure. \end{proof}

 

\section{Preuve du r\'esultat principal}

\subsection{La preuve} Ce paragraphe est consacr\'e \`a la preuve du Th\'eor\`eme \ref{principal}. On utilise les notations introduites dans le paragraphe pr\'ec\'edent et l'on suppose d\'esormais que $k$ est un corps de nombres. Choisissons $$\alpha_1,\cdots,\alpha_m \in \mathrm{Br}(X^\circ)\ \text{ et }\ \beta_1,\cdots,\beta_n \in \mathrm{Ker}[\mathrm{Br}(X_\eta) \to \mathrm{Br}(X_\eta \otimes_k k^\mathrm{ab})]$$ tels que leurs images engendrent $\mathrm{Br}(X_\eta)/\mathrm{Br}(\eta)$. Ce dernier quotient est un groupe ab\'elien fini, la raison \'etant que la $\eta$-vari\'et\'e $X_\eta$ et projective, lisse, g\'eom\'etriquement connexe et g\'eom\'etriquement rationnelle. Fixons une extension finie ab\'elienne $M$ de $k$ qui quasi-d\'eploie le tore $T$, pour laquelle le morphisme $X^\circ \to \mathbb{A}^1_k$ admet une section apr\`es extension des scalaires \`a $M$ et telle que $$\mathrm{Res}_{k/M}(\beta_i) = 0\ \text{ pour }\ 1 \leq i \leq n.$$

On suppose qu'il n'y ait pas d'obstruction de Brauer-Manin au principe de Hasse et \`a l'approximation faible, c'est-\`a-dire que l'\'egalit\'e $$\sum_{v \in \Omega_k} \mathrm{inv}_v(\mathcal{A}(P_v)) = 0$$ vaut pour tout $\alpha \in \mathrm{Br}(X)$ et tout point ad\'elique $(P_v)_{v \in \Omega_k} \in X(\mathbf{A}_k)$. On fixe un tel point $(P_v)_{v \in \Omega_k} \in X(\mathbf{A}_k)$ et un ensemble fini $S$ de places de $k$. On d\'emontrera que l'on peut approximer les points locaux $(P_v)_{v \in S}$ par un point global $P \in X(k)$. 

En cours de route, on peut toujours agrandir l'ensemble fini de places $S$. On peut donc supposer sans perte de g\'en\'eralit\'e que $S$ contient toutes les places archim\'ediennes de $k$. Consid\'erons le sous-groupe $B = \langle \alpha_1,\cdots,\alpha_m,\beta_1,\cdots,\beta_n \rangle$ de  $\mathrm{Br}(X_\eta)$. Un argument de passage \`a la limite en cohomologie \'etale permet de supposer  $B \subseteq \mathrm{Br}(X_U)$, en r\'etr\'ecissant l'ouvert $U$ si n\'ecessaire. En agrandissant l'ensemble $S$ si n\'ecessaire, on peut supposer que les morphismes $X_U \to U \to k$ admettent des mod\`eles entiers $\mathcal{X}_\mathcal{U} \to \mathcal{U} \to \mathcal{O}_{k,S}$ tels que les fibres de $\mathcal{X}_\mathcal{U} \to \mathcal{U}$ soient toutes projectives, lisses et  g\'eom\'etriquement int\`egres, de fa\c{c}on que $X^\circ \to \mathbb{A}^1_k$ s'\'etende en un morphisme $\mathcal{X}^\circ \to \mathbb{A}^1_{\mathcal{O}_{k,S}}$.  En agrandissant encore $S$, on peut alors supposer que les \'el\'ements de $B$ appartiennent tous au sous-groupe $\text{Br}(\mathcal{X}_\mathcal{U})$ de $\text{Br}(X_\eta)$. Le lemme de Hensel et les estimations de Lang-Weil permettent de supposer \'egalement que pour tout $v \not\in S$ et $\mathcal{Q}$ point ferm\'e de $\mathcal{U}$, la fibre $(\mathcal{X}_\mathcal{U})_\mathcal{Q}$ a un $\kappa(v)$-point ($\kappa(v)$ \'etant le corps r\'esiduel associ\'e \`a $v$).

Comme $X$ est lisse (et g\'eom\'etriquement int\`egre) sur $k$, le th\'eor\`eme des fonctions implicites assure que tout voisinage $v$-adique de $P_v$ est Zariski dense dans $X$. On peut donc remplacer chaque $P_v$ par un $k_v$-point arbitrairement proche, que l'on notera encore $P_v$, qui appartient \`a $X_U$. Notons $Q_v = p(P_v)$ pour tout $v \in S$. 

Choisissons (en utilisant l'approximation faible) $Q_0 \in U(k)$, suffisamment proche de $Q_v$ pour $v \in S$ archim\'edienne, diff\'erent des points $Q_v$. En utilisant un $k$-automorphisme convenable de $\mathbb{P}^1_k$, on peut supposer que $Q_0$ est le point \`a l'infini. Les points $(P_v)_{v \in S}$ appartiennent donc \`a $\mathbb{A}^1_k = \mathrm{Spec}(k[t]) \subseteq \mathbb{P}^1_k$. Notons alors $\lambda_v \in k_v$ pour la $t$-coordonn\'ee de $Q_v \in \mathbb{A}^1(k_v)$. On cherche $Q \in U(k)$ tr\`es proche de $Q_v$ pour toute place finie $v \in S$ et suffisamment grand pour toute place archim\'edienne tel que la fibre $X_Q$ ait un point rationnel, tr\`es proche des points locaux $P_v$ pour $v \in S$.  

Le compl\'ement de $U$ dans $\mathbb{A}^1_k$ est r\'eunion de points $Q_i \in \mathbb{A}^1_k = \mathrm{Spec}(k[t])$, d\'ecrits par des polyn\^omes unitaires  irr\'eductibles $F_i(t) \in k[t]$, pour $1 \leq i \leq s$. Consid\'erons tous les \'el\'ements de $\mathrm{Br}(k(X))$ de la forme $(F_i(t),\chi)$, o\`u $\chi$ est un caract\`ere de $M/k$. Ceux-ci sont d\'efinis par cup-produit $$k(X)^\star \times H^2(k,\mathbf{Z}) \to \mathrm{Br}(k(X)),$$ via l'identification $H^1(k,\mathbf{Q}/\mathbf{Z}) \cong H^2(k,\mathbf{Z})$. Il est clair que ces \'el\'ements s'\'etendent en des alg\`ebres d'Azumaya sur $X_U$. Soit $B'$ le sous-groupe fini de $\mathrm{Br}(X_U)$ engendr\'e par ces \'el\'ements et ceux de $B$. En agrandissant $S$, on peut supposer que les \'el\'ements $(F_i(t),\chi)$ appartiennent \`a $\text{Br}(\mathcal{X}_\mathcal{U})$, i.e. $B' \subseteq \mathrm{Br}(\mathcal{X}_\mathcal{U})$.

Agrandissons encore l'ensemble $S$: on ajoute les places associ\'ees aux polyn\^omes $F_i(t)$ via l'hypoth\`ese (H$_1$) \cite{CTSD94} et les places qui ramifient dans l'extension $M/k$. Consid\'erons l'image de $\mathrm{Br}(X^\circ)$ dans $\mathrm{Br}(X_M^\circ)$. Le lemme \ref{brconstant} implique que pour tout $i$, on peut trouver un \'el\'ement $\gamma_i \in \mathrm{Br}\,M$ tel que $\gamma_i$ s'envoie sur l'image de $\alpha_i$ via la fl\`eche canonique $\mathrm{Br}\,M \to \mathrm{Br}(X_M^\circ)$. En prenant $S$ suffisamment grand, on peut maintenant supposer que $\mathrm{inv}_w\ \gamma_i = 0$ pour $i \in \{1,\cdots,m\}$ et pour $w \not\in S_M$ ($S_M$ \'etant l'ensemble des places de $M$ au-dessus des places dans $S$).

On suppose qu'il n'y a pas d'obstruction de Brauer-Manin \`a l'approximation faible pour $(P_v)_{v \in S}$. Le lemme formel de Harari (\cite[Corollaire 2.6.1]{Har}) donne alors l'existence d'un ensemble fini de places $S_1 \supseteq S$ et de points locaux $P_v \in X_U(k_v)$ pour $v \in S_1 \setminus S$, tels que pour tout $\mathcal{A} \in B'$, $$\sum_{v \in S_1} \text{inv}_v (\mathcal{A}(P_v)) = 0 \in \mathbf{Q}/\mathbf{Z}.$$ L'hypoth\`ese (H$_1$) donne alors l'existence de $\lambda \in k$ qui est suffisamment proche de $\lambda_v$ pour toute place finie $v \in S_1$, entier en dehors de $S_1$ et suffisamment grand pour les places archim\'ediennes, tel qu'on ait (simultan\'ement):

\begin{itemize}
\item $\lambda \in U(k)$;
\item pour $v \in S_1$, la fibre $X_\lambda$ contient un $k_v$-point $P_v'$ qui est $v$-adiquement proche de $P_v$, tel que $\text{inv}_v(\mathcal{A}(P_v)) = \text{inv}_v(\mathcal{A}(P_v'))$ pour tout $\mathcal{A} \in B'$;
\item pour $1 \leq i \leq s$, il existe une place $v_i$ de $k$ telle que $F_i(\lambda)$ est une uniformisante dans $\mathcal{O}_{k,v_i}$ et une unit\'e dans $k_v$ pour toute $v \not\in S_1 \cup \{v_i\}$. \end{itemize}

Pour $v \not\in S_1$ et $v \neq v_i$ pour tout $i$, on obtient alors, gr\^ace au lemme de Hensel, un $\mathcal{O}_v$-point de la fibre $(\mathcal{X}_\mathcal{U})_{\lambda}$. L'ensemble $S$ a \'et\'e pris suffisamment grand pour que $B' \subseteq \text{Br}(\mathcal{X}_\mathcal{U})$. On obtient donc de cette fa\c{c}on un $\mathcal{O}_v$-point $P_v'$ de $X_{\lambda}$ tel que $\text{inv}_v\,\alpha(P_v') = 0$ pour tout $\alpha \in B'$ (``bonne r\'eduction'').

Analysons maintenant la situation si $v = v_i$ est l'une des ``places de Schinzel''. Notons que par la loi de r\'eciprocit\'e globale, on a, pour tout caract\`ere $\chi$ de $M/k$, $$\sum_{v \in \Omega_k} \text{inv}_v (F_i(\lambda),\chi) = 0.$$ Par construction, on a \'egalement  $$\sum_{v \in S_1} \text{inv}_v\, (F_i(\lambda),\chi) = 0.$$ Pour $v \not\in S_1$ et $v \neq v_i$, on sait que $F_i(\lambda)$ est une unit\'e dans $k_{v}$. On trouve donc $$\text{inv}_{v_i}\,(F_i(\lambda),\chi) = 0.$$ Cette  derni\`ere \'egalit\'e vaut pour tout caract\`ere $\chi$ de $M/k$. Il en r\'esulte que la place $v_i$ est totalement d\'ecompos\'ee dans l'extension (ab\'elienne!) $M/k$, car (par construction) $F_i(\lambda)$ est une uniformisante dans $\mathcal{O}_{k,v_i}$.

La fibre $X_\lambda$ est une compactification lisse d'un espace principal homog\`ene sous $T$, qui devient quasi-trivial apr\`es extension des scalaires \`a $M/k$. Il en r\'esulte que le $k_{v_i}$-tore $T_{k_{v_i}}$ est quasi-trivial, donc $H^1(k_{v_i},T_{k_{v_i}}) = 0$. Ceci implique la $k_{v_i}$-rationalit\'e de $X_\lambda \otimes_k k_{v_i}$. Il existe donc un $k_{v_i}$-point $P_{v_i}'$.

Montrons que $\text{inv}_{v_i}\,\mathcal{A}(P_{v_i}') = 0$ pour $\mathcal{A} \in B$. Comme $v_i$ est totalement d\'ecompos\'ee dans $M/k$, la fl\`eche $\langle \alpha_1,\cdots,\alpha_m \rangle \to \mathrm{Br}(X^\circ_{k_{v_i}})$ se factorise en $$\langle \alpha_1,\cdots,\alpha_m \rangle \to \text{Br}(X_M^\circ) \to \text{Br}(X^\circ_{k_{v_i}}).$$ Pour $1 \leq j \leq m$, on trouve alors $$\alpha_j(P_{v_i}') = \left.\alpha_j\right|_{k_{v_i}}(P_{v_i}') = \left.\gamma_j\right|_{k_{v_i}}(P_{v_i}') =  0.$$ Pour $1 \leq \ell \leq n$, on obtient $\beta_\ell(P_{v_i}') = \left.\beta_\ell\right|_{k_{v_i}}(P_{v_i}')=0$ car $\left.\beta_\ell\right|_M = 0$. Donc $$\text{inv}_{v_i}\,\mathcal{A}(P_{v_i}') = 0$$ pour $\mathcal{A} \in B$.

Faisons un r\'esum\'e de la situation. On a construit pour toute $v \in \Omega_k$ un $k_v$-point $P_v'$ de la fibre $X_\lambda$. On a $\sum_{v \in S_1} \text{inv}_v\,\mathcal{A}(P_v') = 0$ et $\text{inv}_v\,\mathcal{A}(P_v') = 0$ pour tout $\mathcal{A} \in B$ et $v \not\in S_1$. On obtient donc $\sum_{v \in \Omega_k} \text{inv}_v\,\mathcal{A}(P_v') = 0$, i.e. on a construit un point ad\'elique de la fibre $X_\lambda$ orthogonal \`a $B$. Or, le Lemme \ref{spec} assure que la fl\`eche $$\mathrm{Br}(X_\eta)/\mathrm{Br}(\eta) \to \mathrm{Br}(X_\lambda)/\mathrm{Br}(k)$$ est un isomorphisme. L'image du groupe $B$ dans le quotient $\mathrm{Br}(X_\eta)/\mathrm{Br}(\eta)$ engendre celui-ci. Le point ad\'elique $(P_v)_{v \in \Omega_k}$ est donc orthogonal au groupe de Brauer de $X_\lambda$, qui est une compactification lisse d'un espace principal homog\`ene sous un $k$-tore. Un r\'esultat classique de Sansuc \cite[Corollaires 8.7 et 8.13]{Sansuc} implique que l'on peut approximer ce point ad\'elique par un point global $P \in X_\lambda(k)$, ce qui ach\`eve la D\'emonstration $\square$

\subsection{Sur la remarque \ref{addcomb}} Expliquons pourquoi le r\'esultats r\'ecents de Green, Tao et Ziegler en combinatoire additive impliquent une version inconditionnelle du Th\'eor\`eme \ref{principal} si $k = \mathbf{Q}$ et si toutes les fibres non-scind\'ees sont d\'efinies sur $\mathbf{Q}$. 

Dans la preuve ci-dessus, on peut alors supposer que l'ouvert $U$ est le compl\'ement d'un nombre fini de points rationnels. Les polyn\^omes $F_i(t)$ dans la preuve sont donc tous de la forme $t - e_i$, pour $e_1,\cdots,e_s \in \mathbf{Q}$. L\`a o\`u on a choisi $\lambda \in \mathbf{Q}$ utilisant l'hypoth\`ese (H$_1$), on peut maintenant choisir $\lambda$ utilisant \cite[Proposition 2.1]{HSW}. On trouve donc $\lambda \in \mathbf{Q}$ arbitrairement proche de $\lambda_v$ pour $v \in S_1$ finie et suffisamment grand pour la topologie r\'eelle, tel que $\lambda$ est une uniformisante dans $\mathbf{Q}_{v_i}$ pour $1 \leq i \leq s$ et $$\mathrm{val}_{v}(\lambda - e_i) \leq 0\ \text{ pour }\ v \not\in S_1 \cup \{v_1,\cdots,v_s\}.$$ Cette derni\`ere in\'egalit\'e implique que la r\'eduction de $\lambda$ modulo $v$ est un \'el\'ement de $\mathbb{P}^1(\kappa(v))$ autre que les r\'eductions de $e_1,\cdots,e_s$. Les estimations de Lang-Weil et le lemme de Hensel impliquent que ce $\kappa(v)$-point se rel\`eve en un $\mathcal{O}_v$-point sur un mod\`ele entier. Le reste de l'argument ne change pas.

\section{Applications}

 Comme on l'a d\'ej\`a expliqu\'e dans l'introduction, notre th\'eor\`eme principal g\'en\'eralise \cite[Theorem 3.1]{Wei11}. Il englobe \'egalement \cite[Theorem 3.2]{Wei11}, qui concerne les mod\`eles projectifs et lisses des vari\'et\'e affines d\'efinies par une \'equation de la forme $$(X_1^2 - aY_1^2)(X_2^2 - bY_2^2)(X_3^2 - abY_3^2) = P(t),$$ pour $a,b \in k$. Une telle vari\'et\'e est fibr\'ee sur $\mathbb{P}^1_k$ via la projection sur $t$. Les fibres lisses sont des torseurs sous le tore noyau du morphisme de tores \begin{eqnarray*} R_{k(\sqrt{a})/k}\mathbb{G}_m \times R_{k(\sqrt{b})/k}\mathbb{G}_m \times  R_{k(\sqrt{ab})/k}\mathbb{G}_m & \longrightarrow &  \mathbb{G}_m \\  (\mathbf{x},\mathbf{y},\mathbf{z}) & \mapsto &  N_{k(\sqrt{a})/k}(\mathbf{x}) \cdot N_{k(\sqrt{b})/k}(\mathbf{y}) \cdot N_{k(\sqrt{ab})/k}(\mathbf{z}). \end{eqnarray*} Colliot-Th\'el\`ene a d\'emontr\'e \cite[Th\'eor\`eme 4.1]{Colliot} que si aucune des constantes $a$, $b$ et $ab$ n'est un carr\'e dans $k$, alors  $(X_1^2 - aY_1^2,b)$ est l'unique \'el\'ement non-trivial de $\mathrm{Br}(X_\eta)/\mathrm{Br}(\eta)$. Comme cet \'el\'ement de $\mathrm{Br}(X_\eta)$ s'annule apr\`es extension des scalaires \`a l'extension ab\'elienne $k(\sqrt{b})/k$, le th\'eor\`eme \ref{principal} s'applique.

Dans son texte \cite{Liang12}, Liang a obtenu des r\'esultats sur les fibrations sous une hypoth\`ese technique (Br) sur le  groupe de Brauer (voir par exemple \cite[Theorem 2.1]{Liang12}) qui para\^it moins restrictive que la n\^otre. On montre maintenant que ces hypoth\`eses sont en fait \'equivalentes:

\begin{prop} \label{liang} Avec les notations utilis\'ees pr\'ec\'edemment au \S 2, on consid\`ere des points ferm\'es $Q_1, Q_2, \cdots,Q_r$ de $\mathbb{A}^1_k$ tels que les fibres $Y_1,\cdots,Y_r$ au-dessus de ces points sont lisses et g\'eom\'etriquement int\`egres. Si $\mathrm{Br}\left(X^\circ \setminus \bigsqcup_{i = 1}^r Y_i\right)$ engendre $\mathrm{Br}(X_\eta)/\mathrm{Br}(\eta)$, alors $\mathrm{Br}(X^\circ)$ engendre $\mathrm{Br}(X_\eta)/\mathrm{Br}(\eta)$. \end{prop}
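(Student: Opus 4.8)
Le plan est de comparer les groupes $\mathrm{Br}(X^\circ)$ et $\mathrm{Br}\big(X^\circ \setminus \bigsqcup_{i=1}^{r} Y_i\big)$ gr\^ace \`a un th\'eor\`eme de puret\'e pour le groupe de Brauer. Posons $W = X^\circ$, $W^\circ = X^\circ \setminus \bigsqcup_{i=1}^{r} Y_i$, et notons $p : W \to \mathbb{A}^1_k$ le morphisme structural. Comme les $Q_i$ sont des points ferm\'es de $\mathbb{A}^1_k$ et que les fibres $Y_i = p^{-1}(Q_i)$ sont lisses (donc r\'eduites) et g\'eom\'etriquement int\`egres (donc irr\'eductibles), chaque $Y_i$ est un diviseur premier \emph{lisse} de multiplicit\'e $1$ de la $k$-vari\'et\'e lisse $W$, et les $Y_i$ sont deux \`a deux disjoints. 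Le th\'eor\`eme de puret\'e de Grothendieck le long du diviseur lisse $\bigsqcup_i Y_i \subset W$ fournit une suite exacte
\[
0 \longrightarrow \mathrm{Br}(W) \longrightarrow \mathrm{Br}(W^\circ) \xrightarrow{\ (\partial_{Y_i})_i\ } \bigoplus_{i=1}^{r} H^1(Y_i,\mathbf{Q}/\mathbf{Z}).
\]
Par ailleurs $W$ et $W^\circ$ ont la m\^eme fibre g\'en\'erique au-dessus de $\mathbb{A}^1_k$, \`a savoir $X_\eta$ (enlever des fibres au-dessus de points ferm\'es ne change pas la fibre g\'en\'erique, et $X^\circ_\eta = X_\eta$ puisque $p$ est g\'en\'eriquement projectif \`a fibres connexes, un ouvert propre d'une vari\'et\'e int\`egre \'etant la vari\'et\'e tout enti\`ere). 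On dispose donc de fl\`eches de restriction $\mathrm{Br}(W^\circ) \to \mathrm{Br}(X_\eta)$ et $\mathrm{Br}(W) \to \mathrm{Br}(X_\eta)$ compatibles \`a l'inclusion $\mathrm{Br}(W) \subseteq \mathrm{Br}(W^\circ)$, et il suffira de montrer que l'image de $\mathrm{Br}(W^\circ)$ dans $\mathrm{Br}(X_\eta)/\mathrm{Br}(\eta)$ est contenue dans celle de $\mathrm{Br}(W)$.

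Soit donc $\alpha \in \mathrm{Br}(W^\circ)$. L'ingr\'edient cl\'e sera que chaque fibre $Y_i$ est g\'eom\'etriquement rationnelle --- c'est une compactification lisse d'un torseur sous le tore $T \otimes_k \kappa(Q_i)$, $\kappa(Q_i)$ \'etant le corps r\'esiduel (fini sur $k$) de $Q_i$, en vertu de la structure de $X$ rappel\'ee au \S 2. La vari\'et\'e $\overline{Y_i}$ \'etant lisse, propre et rationnelle, elle est simplement connexe, de sorte que la suite spectrale de Hochschild--Serre fournit un isomorphisme $H^1(\kappa(Q_i),\mathbf{Q}/\mathbf{Z}) \xrightarrow{\ \sim\ } H^1(Y_i,\mathbf{Q}/\mathbf{Z})$, donn\'e par l'image r\'eciproque le long de $Y_i \to \mathrm{Spec}\,\kappa(Q_i)$ (on utilise ici que $Y_i$ est g\'eom\'etriquement connexe). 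Le r\'esidu $\partial_{Y_i}(\alpha)$ provient donc d'un caract\`ere $\chi_i \in H^1(\kappa(Q_i),\mathbf{Q}/\mathbf{Z})$.

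On utilisera ensuite la suite exacte de Faddeev pour $\mathrm{Br}(k(\mathbb{P}^1))$ pour trouver un \'el\'ement $c \in \mathrm{Br}(k(t)) = \mathrm{Br}(\eta)$ ayant pour r\'esidu $-\chi_i$ en chaque $Q_i$, pour r\'esidu nul en tout autre point ferm\'e de $\mathbb{A}^1_k$, le r\'esidu en $\infty$ \'etant choisi de sorte que la relation globale de corestriction soit satisfaite --- ce qui est loisible puisque $\infty$ ne figure pas parmi les $Q_i$. Le tir\'e en arri\`ere $p^\star c$ est alors non ramifi\'e le long de tout diviseur de $W^\circ$ (un tel diviseur ou bien domine $\mathbb{A}^1_k$, ou bien est contenu dans une fibre au-dessus d'un point ferm\'e distinct des $Q_i$, en lequel $c$ est non ramifi\'e); donc $p^\star c \in \mathrm{Br}(W^\circ)$, et, la fibre $Y_i$ \'etant de multiplicit\'e $1$ au-dessus de $Q_i$, on a $\partial_{Y_i}(p^\star c) = -\chi_i = -\partial_{Y_i}(\alpha)$ dans $H^1(Y_i,\mathbf{Q}/\mathbf{Z}) \cong H^1(\kappa(Q_i),\mathbf{Q}/\mathbf{Z})$.

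Par suite, $\alpha + p^\star c \in \mathrm{Br}(W^\circ)$ a tous ses r\'esidus $\partial_{Y_i}$ nuls, donc se prolonge, d'apr\`es la suite de puret\'e ci-dessus, en un \'el\'ement $\alpha' \in \mathrm{Br}(W) = \mathrm{Br}(X^\circ)$. En restreignant \`a la fibre g\'en\'erique $X_\eta$ on obtient $\alpha'|_{X_\eta} = \alpha|_{X_\eta} + c_{X_\eta}$, o\`u $c_{X_\eta}$ d\'esigne l'image de $c \in \mathrm{Br}(\eta)$ dans $\mathrm{Br}(X_\eta)$; ainsi $\alpha'$ et $\alpha$ ont m\^eme image dans $\mathrm{Br}(X_\eta)/\mathrm{Br}(\eta)$, ce qui \'etablit l'inclusion voulue ; comme par hypoth\`ese l'image de $\mathrm{Br}(W^\circ)$ engendre $\mathrm{Br}(X_\eta)/\mathrm{Br}(\eta)$, celle de $\mathrm{Br}(X^\circ)$ l'engendre aussi. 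L'obstacle principal --- et le seul endroit o\`u l'on fait autre chose que de la comptabilit\'e de r\'esidus via la puret\'e et la suite de Faddeev --- est de s'assurer que $\partial_{Y_i}(\alpha)$ ``provient de la base'', i.e. de $H^1(\kappa(Q_i),\mathbf{Q}/\mathbf{Z})$ : c'est l\`a qu'intervient de fa\c{c}on essentielle la rationalit\'e (et la connexit\'e) g\'eom\'etrique des fibres lisses d'une fibration en torseurs sous un tore.
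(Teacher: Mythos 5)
Your plan runs parallel to the paper's --- identify the residue $\partial_{Y_i}(\alpha)$ as coming from the base field $\kappa(Q_i)$, then cancel it with a suitable class from $\mathrm{Br}(\eta)$ --- but the way you justify the key identification has a genuine gap. You assert that ``chaque fibre $Y_i$ \ldots c'est une compactification lisse d'un torseur sous le tore $T\otimes_k\kappa(Q_i)$'', hence smooth, proper and rational, hence $\overline{Y_i}$ is simply connected and the Hochschild--Serre map $H^1(\kappa(Q_i),\mathbf{Q}/\mathbf{Z})\to H^1(Y_i,\mathbf{Q}/\mathbf{Z})$ is an isomorphism. That is not what the statement provides: $Y_i$ is the fibre of $X^\circ\to\mathbb{A}^1_k$ over $Q_i$, where $X^\circ$ is only an \emph{open} of $X$ and the morphism $X^\circ\to\mathbb{A}^1_k$ is merely \emph{generically} projective. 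Thus $Y_i$ is \emph{a priori} only an open subset of the full fibre $X_{Q_i}$; nothing forces it to be proper. A smooth geometrically rational open variety that is not proper (think of the complement of a divisor) can certainly have $H^1(\overline{Y_i},\mathbf{Q}/\mathbf{Z})\neq 0$, so the isomorphism you invoke can fail, and the step where $\partial_{Y_i}(\alpha)$ is asserted to ``come from the base'' is exactly where your argument breaks down.

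The paper sidesteps this precisely because it never claims the restriction map is an isomorphism: it only needs \emph{injectivity} on the left (which follows from $Y_i$ being geometrically integral, by Hochschild--Serre) together with the fact that the specific element $\partial_{Y_i}(\alpha)$ lands in $\ker\bigl[H^1(Y_i,\mathbf{Q}/\mathbf{Z})\to H^1(\overline{Y_i},\mathbf{Q}/\mathbf{Z})\bigr]$. The latter is obtained by restricting $\alpha$ over $\overline{k}$ and using $\mathrm{Br}(X_{\eta\otimes_k\overline{k}})\cong\mathrm{Br}(\overline{k}(t))=0$ (Tsen, plus geometric rationality of $X_\eta$), so the residue of $\alpha$ vanishes after going to $\overline{k}$. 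This works with no properness assumption on $Y_i$. The rest of your argument (Faddeev, pullback $p^\star c$, purity to extend to $\mathrm{Br}(X^\circ)$) is a correct cosmetic variant of the paper's explicit corestriction $\beta_i=\mathrm{Cores}_{K_i/k}(\chi_i,t-e_i)$, but it rests on that unjustified step. To repair your proof, replace the ``$\overline{Y_i}$ is simply connected'' claim by the Tsen-vanishing argument above.
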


\begin{proof}[D\'emonstration] Prenons une classe $\overline{\alpha} \in \mathrm{Br}\,X_\eta/\mathrm{Br}\,\eta$ et $\alpha \in \mathrm{Br}\left(X^\circ \setminus \bigsqcup_{i = 1}^r Y_i\right)$ qui s'envoie sur $\overline{\alpha}$. Supposons que $\alpha$ ramifie au point g\'en\'erique de $Y_i$. Soit $$\partial_i: \mathrm{Br}\,(X_\eta) \to H^1(Y_i,\mathbf{Q}/\mathbf{Z})$$ l'application r\'esidu. On voit facilement que $$\mathrm{Br}(X_{\eta \otimes_k \overline{k}}) \cong \mathrm{Br}(\eta \otimes_k \overline{k}) = 0$$ (th\'eor\`eme de Tsen), car $X_{\eta \otimes_k \overline{k}}$ est projective, lisse et $(\eta \otimes_k \overline{k})$-rationnelle. 

On a donc le diagramme commutatif suivant: \begin{center}
\begin{tikzpicture}[auto]
\node (L1) {$\mathrm{Br}(X_\eta)$};
\node (L2) [below= 1cm of L1] {$\mathrm{Br}(X_{\eta \otimes_k \overline{k}})$};
\node (M1) [right=1.5cm  of L1] {$H^1(Y_i,\mathbf{Q}/\mathbf{Z})$};
\node (M2) at (M1 |- L2) {$H^1(\overline{Y_i},\mathbf{Q}/\mathbf{Z})$};
\draw[->] (L1) to node {\footnotesize $\partial_i$} (M1);
\draw[->] (L2) to node {\footnotesize $\partial_i$} (M2);
\draw[->] (L1) to node {} (L2);
\draw[->] (M1) to node {} (M2);
\end{tikzpicture}
 \end{center} Ceci montre que $\partial_i(\alpha)$ appartient au noyau de l'application $$H^1(Y_i,\mathbf{Q}/\mathbf{Z}) \to H^1(\overline{Y}_i,\mathbf{Q}/\mathbf{Z}).$$ Si $K_i = k(Q_i)$ il en r\'esulte que $\partial_i(\alpha)$ provient d'un caract\`ere $\chi_i \in H^1(K_i,\mathbf{Q}/\mathbf{Z})$ -- c'est l\`a que l'on utilise le fait que $Y_i$ est g\'eom\'etriquement int\`egre.  Consid\'erons $$\beta_i := \mathrm{Cores}_{K_i/k}(\chi_i, t - e_i) \in \mathrm{Br}\,\eta,$$ o\`u $e_i$ est la classe de $t$ dans le quotient $K_i = k[t]/(P_i(t))$, o\`u $P_i(t)$ est le polyn\^ome qui d\'efinit le point ferm\'e $Q_i$. Regardons le diagramme commutatif \begin{center}
\begin{tikzpicture}[auto]
\node (L1) {$\mathrm{Br}(\eta \otimes_k K_i)$};
\node (L2) [below= 1.2cm of L1] {$\eta$};
\node (M1) [right=1.8cm  of L1] {$\bigoplus_{M \mapsto Q_i} H^1(K_i(M),\mathbf{Q}/\mathbf{Z})$};
\node (M2) at (M1 |- L2) {$H^1(k(Q_i),\mathbf{Q}/\mathbf{Z})$};
\draw[->] (L1) to node {} (M1);
\draw[->] (L2) to node {} (M2);
\draw[->] (L1) to node {} (L2);
\draw[->] (M1) to node {} (M2);
\end{tikzpicture}
 \end{center}
dans lequel les fl\`eches horizontales sont les applications r\'esidu et les fl\`eches verticales sont les applications de corestriction. Ceci montre que $\partial_{Q_i}(\beta_i) = \chi_i$. L'alg\`ebre $\beta_i$ est clairement non-ramifi\'ee en dehors de $Q_i$. On voit maintenant que $\alpha - \sum_{i = 1}^r \beta_i$ appartient non seulement \`a $\mathrm{Br}(X^\circ \setminus \bigsqcup Y_i)$ mais aussi au sous-groupe $\mathrm{Br}\,X^\circ$. Cet \'el\'ement s'envoie toujours sur $\overline{\alpha} \in \mathrm{Br}\,X_\eta/\mathrm{Br}\,\eta$. Il en r\'esulte que $\mathrm{Br}\,X^\circ \longrightarrow \mathrm{Br}\,X_\eta/\mathrm{Br}\,\eta$ est surjective.  \end{proof} 

Voici un exemple d'application du Th\'eor\`eme \ref{principal} (cfr. Corollaire \ref{coroll}):

\begin{prop} \label{cor} Soit $T$ un $k$-tore qui est quasi-d\'eploy\'e par une extension cyclique. Soit $f: X \to \mathbb{P}^1_k$ une fibration en torseurs sous $T$. Admettons l'hypoth\`ese de Schinzel. Alors $X(k)$ est dense dans $X(\mathbf{A}_k)^{\mathrm{Br}(X)}$. \end{prop}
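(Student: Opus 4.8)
La stratégie est de ramener la Proposition \ref{cor} (= Corollaire \ref{coroll}) au Théorème \ref{principal} en vérifiant l'hypothèse technique sur $\mathrm{Br}(X_\eta)/\mathrm{Br}(\eta)$. D'abord, si $T$ est quasi-déployé par une extension cyclique $K/k$, alors $K$ est en particulier une extension abélienne, donc $T$ est quasi-déployé par $k^{\mathrm{ab}}$ et la première partie des hypothèses du Théorème \ref{principal} est satisfaite; de plus le groupe $\mathrm{Br}(X_\eta)/\mathrm{Br}(\eta)$ est abélien fini. Il reste donc à construire un ouvert $X^\circ \subseteq X$ lisse, surjectif, génériquement projectif à fibres connexes au-dessus de $\mathbb{A}^1_k$, admettant une section sur $k^{\mathrm{ab}}$ (un tel ouvert existe toujours par la discussion qui précède le Lemme \ref{brconstant}, puisque $K$, donc $k^{\mathrm{ab}}$, quasi-déploie $T$), et à vérifier que $\mathrm{Br}(X_\eta)/\mathrm{Br}(\eta)$ est engendré par les images de $\mathrm{Br}(X^\circ)$ et de $\mathrm{Ker}[\mathrm{Br}(X_\eta) \to \mathrm{Br}(X_\eta \otimes_k k^{\mathrm{ab}})]$.

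Le cœur de l'argument sera donc le fait suivant : lorsque $K/k$ est \emph{cyclique}, on a en réalité $\mathrm{Ker}[\mathrm{Br}(X_\eta) \to \mathrm{Br}(X_\eta \otimes_k k^{\mathrm{ab}})] = \mathrm{Ker}[\mathrm{Br}(X_\eta) \to \mathrm{Br}(X_\eta \otimes_k K)]$ et, mieux, que ce noyau suffit à lui seul à engendrer $\mathrm{Br}(X_\eta)/\mathrm{Br}(\eta)$ (on peut même prendre $X^\circ$ quelconque convenable). Je vérifierais ceci en passant par la flèche de spécialisation du Lemme \ref{spec} : $\mathrm{Br}(X_\eta)/\mathrm{Br}(\eta) \cong H^1(k,\mathrm{Pic}(X_{\overline\eta}))$, et $X_{\overline\eta}$ est une compactification lisse d'un torseur sous un $\overline k$-tore déployé. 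Pour une compactification lisse $T^c$ d'un $k$-tore $T$ déployé par $K/k$, le module galoisien $\mathrm{Pic}(\overline{T^c})$ est un module de permutation tordu ne dépendant que de l'action de $\mathrm{Gal}(K/k)$; comme $H^1$ se calcule alors via le groupe $\mathrm{Gal}(K/k)$, qui est \emph{cyclique}, on dispose de l'isomorphisme périodicité–Tate et d'une description explicite via les quotients de Herbrand. Concrètement, $H^1(\mathrm{Gal}(K/k),\mathrm{Pic}(\overline{X_\eta}))$ est tué par restriction à $K$, d'où l'inclusion souhaitée $\mathrm{Br}(X_\eta)/\mathrm{Br}(\eta) \subseteq \mathrm{image\ de\ } \mathrm{Ker}[\mathrm{Br}(X_\eta) \to \mathrm{Br}(X_\eta\otimes_k K)]$. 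C'est exactement l'observation, déjà faite par plusieurs auteurs dans le cas normique, que pour une extension cyclique le groupe de Brauer "géométrique" de la fibre est entièrement d'origine cyclotomique/constante relative.

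Une fois cette vérification faite, on applique directement le Théorème \ref{principal} : sous l'hypothèse de Schinzel, $X(k)$ est dense dans $X(\mathbf{A}_k)^{\mathrm{Br}(X)}$, ce qui est l'énoncé voulu. L'obstacle principal est le calcul cohomologique du point précédent : il faut justifier proprement que $\mathrm{Pic}(X_{\overline\eta})$, en tant que $\mathrm{Gal}(\overline\eta/\eta)$-module, se factorise à travers un quotient fini cyclique (ce qui résulte de ce que $T$ est quasi-déployé par l'extension cyclique $K$, et que la structure de $\mathrm{Pic}$ d'une compactification lisse équivariante est contrôlée par le module des caractères de $T$ via la résolution flasque de Colliot-Thélène–Sansuc), puis invoquer la $2$-périodicité de la cohomologie d'un groupe cyclique pour conclure que $H^1$ est annulé par l'inflation depuis $\mathrm{Gal}(K/k)$ et donc par passage à $K \subseteq k^{\mathrm{ab}}$. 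Le reste — existence de $X^\circ$, finitude et abélianité de $\mathrm{Br}(X_\eta)/\mathrm{Br}(\eta)$ — est déjà contenu dans les sections précédentes.
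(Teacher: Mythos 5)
Votre plan de réduction au Théorème~\ref{principal} est correct et coïncide avec celui du texte : on vérifie que $\mathrm{Ker}[\mathrm{Br}(X_\eta) \to \mathrm{Br}(X_\eta \otimes_k K)]$ engendre $\mathrm{Br}(X_\eta)/\mathrm{Br}(\eta)$, puis on applique le théorème avec un ouvert $X^\circ$ quelconque convenable. Le texte, lui, ne donne pas d'argument cohomologique direct : il invoque simplement \cite[Proposition 1.1.(b)]{Colliot}, applicable parce que $X_\eta$ devient rationnelle sur $L$ cyclique.

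En revanche, votre argument cohomologique, tel qu'écrit, a une lacune réelle. Le calcul que vous faites — $\mathrm{Br}(X_\eta)/\mathrm{Br}(\eta) \cong H^1(\eta,\mathrm{Pic}(X_{\overline\eta}))$, l'action de Galois se factorise par $\mathrm{Gal}(K/k)$, et la restriction à $K$ annule $H^1$ — montre seulement que l'application
\[
\mathrm{Br}(X_\eta)/\mathrm{Br}(\eta) \longrightarrow \mathrm{Br}(X_\eta \otimes_k K)/\mathrm{Br}(\eta \otimes_k K)
\]
est nulle. Cela n'entraîne pas que $\mathrm{Ker}[\mathrm{Br}(X_\eta) \to \mathrm{Br}(X_\eta \otimes_k K)]$ se surjecte sur $\mathrm{Br}(X_\eta)/\mathrm{Br}(\eta)$ : étant donné $\alpha \in \mathrm{Br}(X_\eta)$, on sait que $\alpha|_K$ provient de $\mathrm{Br}(\eta \otimes_k K)$, mais pour conclure il faut pouvoir corriger $\alpha$ par une classe constante $\gamma \in \mathrm{Br}(\eta)$ telle que $\gamma|_K = \alpha|_K$ dans $\mathrm{Br}(\eta \otimes_k K)$ — autrement dit, il faut que $\alpha|_K$ soit dans l'image de $\mathrm{Br}(\eta) \to \mathrm{Br}(\eta \otimes_k K)$. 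C'est précisément là que la cyclicité sert : pour $G = \mathrm{Gal}(K/k)$ cyclique, $H^3(G, K(t)^\star) = H^1(G, K(t)^\star) = 0$ (Hilbert 90 et périodicité), donc $\mathrm{Br}(\eta) \twoheadrightarrow \mathrm{Br}(\eta\otimes_k K)^G$ ; de manière équivalente, la différentielle $d_2^{1,1}$ de la suite spectrale de Hochschild–Serre associée à $X_{\eta \otimes_k K} \to X_\eta$ s'annule. Or vous invoquez la cyclicité et la périodicité de Tate uniquement pour l'annulation de $H^1$ par restriction — ce qui est trivialement vrai pour \emph{tout} groupe fini. Tel quel, votre raisonnement s'appliquerait mot pour mot à une extension abélienne non cyclique, où la conclusion peut être fausse ; c'est le signe que l'étape de relèvement manque. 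Il faudrait soit la rédiger explicitement via la suite spectrale ou la surjectivité de $\mathrm{Br}(\eta) \to \mathrm{Br}(\eta\otimes_k K)^G$, soit, comme le fait le texte, citer directement \cite[Proposition 1.1.(b)]{Colliot}.
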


\begin{proof}[D\'emonstration] Ceci est une cons\'equence imm\'ediate du Th\'eor\`eme \ref{principal} si on v\'erifie que $\mathrm{Ker}[\mathrm{Br}(X_\eta) \to \mathrm{Br}(X_\eta \otimes_k L)]$ engendre $\mathrm{Br}(X_\eta)/\mathrm{Br}(\eta)$, o\`u $L/k$ est l'extension cyclique qui quasi-d\'eploie le $k$-tore $T$. Pour l'ouvert $X^\circ$ dans le Th\'eor\`eme \ref{principal}, on peut alors prendre n'importe quel ouvert de $X$ qui est lisse, surjectif, g\'en\'eriquement projectif \`a fibres connexes sur $\mathbb{A}^1_k$, tel que le morphisme induit $X^\circ \to \mathbb{A}^1_k$ admette une section sur $L$ (cfr. la discussion qui pr\'ec\`ede le Lemme \ref{brconstant}).

Or, ceci r\'esulte facilement de \cite[Proposition 1.1.(b)]{Colliot}, appliqu\'ee \`a $X_\eta$. La $\eta$-vari\'et\'e $X_\eta$ devient bien rationnelle apr\`es extension des scalaires \`a $L$, car le tore $T \otimes_k L$ est quasi-trivial, et tout torseur sous un tore quasi-trivial est trivial. \end{proof} 

  Ceci donne une g\'en\'eralisation de \cite[Theorem 3.2]{Wei11}. Comme expliqu\'e ci-dessus, on obtient le m\^eme r\'esultat sans admettre l'hypoth\`ese de Schinzel si $k = \mathbf{Q}$ et toutes les fibres non-scind\'ees sont d\'efinies sur $\mathbf{Q}$.


\section{Z\'ero-cycles}

On \'etablit maintenant l'analogue naturel du Th\'eor\`eme \ref{principal} pour les z\'ero-cycles de degr\'e $1$ au lieu des points rationnels. L'astuce de Salberger servira comme substitut pour l'hypoth\`ese de Schinzel dans notre preuve. On renvoie \`a \cite[Section 3.1]{CTSD94} pour la d\'efinition et les propri\'et\'es de base de l'accouplement entre le groupe de Brauer d'une vari\'et\'e et les z\'ero-cycles sur cette vari\'et\'e.

\begin{theo} \label{0cyc} Soit $T$ un $k$-tore quasi-d\'eploy\'e par $k^{\mathrm{ab}}$. Soit $f: X \to \mathbb{P}^1_k$ une fibration en torseurs sous $T$. Supposons qu'il existe un ouvert $X^\circ \subseteq X$ qui est lisse, surjectif et g\'en\'eriquement projectif \`a fibres connexes au-dessus de $\mathbb{A}^1_k$, de fa\c{c}on que le morphisme $X^\circ \to \mathbb{A}^1_k$ admette une section sur $k^{\mathrm{ab}}$ et que $\mathrm{Br}(X_\eta)/\mathrm{Br}(\eta)$ soit engendr\'e par les images des groupes suivants:
\begin{itemize}
\item le groupe $\mathrm{Br}(X^\circ)$, et
\item le groupe $\mathrm{Ker}[\mathrm{Br}(X_\eta) \to \mathrm{Br}(X_\eta \otimes_k k^\mathrm{ab})]$. \end{itemize}
Alors  l'obstruction de Brauer-Manin \`a l'existence d'un z\'ero-cycle de degr\'e $1$ sur $X$ est la seule. \end{theo}

\begin{proof}[D\'emonstration] 
On utilise les notations du \S 2. Choisissons $$\alpha_1,\cdots,\alpha_m \in \mathrm{Br}(X^\circ)\ \text{ et }\ \beta_1,\cdots,\beta_n \in \mathrm{Ker}[\mathrm{Br}(X_\eta) \to \mathrm{Br}(X_\eta \otimes_k k^\mathrm{ab})]$$ et une extension ab\'elienne $M$ de $k$ comme dans la preuve du Th\'eor\`eme \ref{principal}. Soit $B$ le sous-groupe $\langle \alpha_1,\cdots,\alpha_m,\beta_1,\cdots,\beta_n \rangle$ de  $\mathrm{Br}(X_\eta)$. En r\'etr\'ecissant l'ouvert $U$ si n\'ecessaire, on peut supposer sans perte de g\'en\'eralit\'e que $B \subseteq \mathrm{Br}(X_U)$. Le compl\'ement de $U$ dans $\mathbb{A}^1_k$ est r\'eunion de points ferm\'es $Q_i \in \mathbb{A}^1_k = \mathrm{Spec}(k[t])$, d\'ecrits par des polyn\^omes unitaires  irr\'eductibles $F_i(t) \in k[t]$, pour $1 \leq i \leq s$.

Fixons d'abord un ensemble fini de places $S_0 \subseteq \Omega_k$ tel que les conditions suivantes soient satisfaites. Les morphismes $X_U \to U \to k$ admettent des mod\`eles $\mathcal{X}_\mathcal{U} \to \mathcal{U} \to \mathcal{O}_{k,S}$ tels que les fibres de $\mathcal{X}_\mathcal{U} \to \mathcal{U}$ soient projectives, lisses et  g\'eom\'etriquement int\`egres, de fa\c{c}on que $X^\circ \to \mathbb{A}^1_k$ s'\'etende en un morphisme $\mathcal{X}^\circ \to \mathbb{A}^1_{\mathcal{O}_{k,S_0}}$. On a des inclusions $$B \subseteq \text{Br}(\mathcal{X}_\mathcal{U}) \subseteq \text{Br}(X_\eta).$$ Pour $v \not\in S_0$ et $\mathcal{Q}$ point ferm\'e de $\mathcal{U}$, la fibre $(\mathcal{X}_\mathcal{U})_\mathcal{Q}$ a un $\kappa(v)$-point. Pour $v \not\in S_0$, l'extension $M/k$ est non-ramifi\'ee en $v$. Finalement, pour $\mathcal{A} \in B \cap \mathrm{Br}(X^\circ)$, pour $w$ une place de $M$ non au-dessus d'une place de $S_0$ et $$j: \mathrm{Br}(X^\circ) \to \mathrm{Br}(X^\circ_M) = \mathrm{Br}_0(X^\circ_M)$$ la fl\`eche naturelle, on a $\mathrm{inv}_w(j(\mathcal{A})) = 0$.




Soit $B'$ le sous-groupe de $\mathrm{Br}(\mathcal{X}_\mathcal{U})$ engendr\'e par les \'el\'ements de $B$ et ceux de la forme $(F_i(t),\chi)$, pour $\chi$ un caract\`ere de l'extension $M/k$. Une variante du lemme formel de Harari (voir \cite[Theorem 3.2.2]{CTSD94}) permet alors de supposer que l'on dispose d'un ensemble fini $S_1 \subseteq \Omega_k$ contenant $S_0$ et pour toute $v \in S_1$, un z\'ero-cycle $z_v$ de degr\'e $1$, support\'e sur $X_U \otimes_k k_v$, tel que $$\sum_{v \in S_1} \text{inv}_v\left( \langle \mathcal{A}, z_v \rangle \right) = 0$$ pour tout $\mathcal{A} \in B'$. Notons $e$ l'exposant du groupe fini $B'$. Pour toute place $v$, on a une \'ecriture (\'evidente) $z_v = z_v^+ - z_v^-$, o\`u $z_v^+$ et $z_v^-$ sont des z\'ero-cycles effectifs. \'Ecrivons $$\tilde{z}_v = z_v + nez_v^-,\text{ o\`u } n = [M : k].$$ On obtient $$\sum_{v \in S_1} \text{inv}_v\left(\langle \mathcal{A}, \tilde{z}_v \rangle \right) = 0$$ pour tout $\mathcal{A} \in B'$. Choisissons un z\'ero-cycle effectif $\zeta$ sur $X_U$ qui est de degr\'e $n$. Alors $\langle \mathcal{A}, eP \rangle = 0$ pour $\mathcal{A} \in B'$. Comme $\deg \tilde{z}_v \equiv 1 \mod{ne}$ et $\deg e\zeta = ne$, on obtient -- en ajoutant des multiples appropri\'es du z\'ero-cycle $e\zeta$ \`a chacun des $\tilde{z}_v$ -- des cycles effectifs $\overline{z}_v$, tous du m\^eme degr\'e $D \equiv 1 \pmod{ne}$, tels que $$\sum_{v \in S_1} \text{inv}_v\left( \langle \mathcal{A}, \overline{z}_v \rangle \right) = 0$$ pour tout $\mathcal{A} \in B'$. En utilisant \cite[Lemma 6.2.1]{CTSD94}, on voit que l'on peut supposer que chacun de ces z\'ero-cycles effectifs $\overline{z}_v$ est somme de points ferm\'es (sans multiplicit\'es), dont les images sous $f: X_U \to U$ sont toutes diff\'erentes. On peut \'egalement supposer que pour tout point ferm\'e $Q$ dans le support du z\'ero-cycle $\overline{z}_v$, l'extension de corps $k_v(f(P)) \subseteq k_v(P)$ est un isomorphisme: on remplace chaque point ferm\'e $Q$ par un point $k_v(Q)$-rationnel de $X_U \otimes_k k_v$ suffisamment proche de $Q$.

 Consid\'erons maintenant les z\'ero-cycles (effectifs) $f(\overline{z}_v)$. Chacun de ces cycles est donn\'e par un polyn\^ome unitaire et s\'eparable $G_v(t) \in k_v[t]$ de degr\'e $D$, premier au produit $\prod_{i = 1}^s F_i(t)$. Les fibres de $f$ au-dessus des racines de $G_v(t)$ admettent un point rationnel, pour $v \in S_1$. Il en est de m\^eme pour tout polyn\^ome unitaire dans $k_v[t]$ suffisamment proche de $G_v(t)$ pour les topologies $v$-adiques sur l'espace des coefficients (Krasner).

 Choisissons maintenant un polyn\^ome unitaire, irr\'eductible $G(t) \in k[t]$ de degr\'e $D$ comme dans \cite[Theorem 3.1]{CTSkoSD98} (``l'astuce de Salberger''); pour le corps $L$ dans ce th\'eor\`eme, on prend une extension finie de $k$ qui contient $M$ et sur laquelle les $F_i(t)$ sont scind\'ees. Pour $V$, on prend l'ensemble $T$ des places de $k$ qui scindent les $F_i(t)$ et qui sont totalement d\'ecompos\'ees dans $L$. On peut choisir $G(t)$ suffisamment proche des $G_v(t)$ pour toute place $v \in S_1$ pour que les conditions suivantes soient satisfaites:  \begin{itemize}
 
 \item si $$N = k[t]/(G(t)),\ \theta = \overline{t} \in N,$$ alors pour chacun des polyn\^omes $F_i(t)$ (pour $1 \leq i \leq s$), il existe une place $w_i$ de $N$, telle que $F_i(\theta)$  soit une uniformisante dans la compl\'etion $N_{w_i}$ et une unit\'e dans $N_w$ si $w$ n'est pas au-dessus de $S_1 \cup T$ (et $w \neq w_i$); \item pour $v \in S_1$ et toute place $w$ au-dessus de $v$, la fibre $X_\theta$ (d\'efinie sur $N$) admet un $N_w$-point $Q_w$, avec la somme $\sum_{w \mid v} Q_w$ suffisamment proche du z\'ero-cycle $\overline{z}_v$ pour que l'\'egalit\'e $$\sum_{w \mid v} \text{inv}_v(\text{Cor}_{N_w/k_v}(\mathcal{A}(Q_w)) = \text{inv}_v\left(\langle \mathcal{A}, \overline{z}_v \rangle \right)$$ vaille pour tout $\mathcal{A} \in B'$. \end{itemize}
 
Montrons maintenant que $X_\theta$ admet un $N_w$-point $Q_w$ pour toute place $w$ de $N$ qui n'est pas au-dessus de $S_1$. Si $w$ n'est pas non plus au-dessus de $T$ et n'est pas \'egale \`a l'une des $w_i$ pour $1 \leq i \leq s$, il est clair que $(\mathcal{X}_\mathcal{U})_\theta$ admet un $\mathcal{O}_{N_w}$-point. 

Si la place $w$ est au-dessus de $T$, alors $w$ est totalement d\'ecompos\'ee dans l'extension $MN$ de $N$; il est donc \'evident que $X_\theta$ admet un $N_w$-point. Concentrons-nous donc sur le cas $w = w_i$ pour un $i \in \{1,\cdots,s\}$. On sait que $$\sum_{v \in S_1, w \mid v} \text{inv}_v \left(\text{Cor}_{N_w/k_v}(F_i(\theta), \chi)\right) = 0$$ et donc $$\sum_{v \notin S_1, w \mid v} \text{inv}_v \left(\text{Cor}_{N_w/k_v}(F_i(\theta), \chi)\right) = 0.$$ 

Or, si $w$ n'est pas au-dessus de $S_1 \cup T$ et $w \neq w_i$, alors $F_i(\theta)$ est une unit\'e dans $N_w$, et si $w$ est au-dessus de $T$, alors $w$ est totalement d\'ecompos\'ee dans l'extension $MN$ de $N$. Dans les deux cas, $\text{inv}_w(F_i(\theta),\chi) = 0$. Il en r\'esulte que $$\text{inv}_{w_i}(F_i(\theta),\chi) = 0$$ pour tout caract\`ere $\chi$ de $M/k$.  Comme $F_i(\theta) \in N$ est une uniformisante dans $N_{w_i}$ et l'extension $M/k$ est ab\'elienne, on voit que $w_i$ est totalement d\'ecompos\'ee dans l'extension $MN$ de $N$. La fibre $X_\theta$ admet donc un $N_{w_i}$-point.

 Exactement comme dans la preuve du th\'eor\`eme \ref{principal}, on montre $\mathrm{inv}_w(\mathcal{A}(Q_w)) = 0$ pour tout $\mathcal{A} \in B$ et toute $w \in \Omega_N$ non au-dessus de $S_1$. 

On obtient donc l'\'egalit\'e $$\sum_{w \in \Omega_N} \mathrm{inv}_w(\mathcal{A}(Q_w)) = 0.$$ La fl\`eche naturelle $$\mathrm{Br}(T^c)/\mathrm{Br}_0(T^c) \to \mathrm{Br}(T^c_N)/\mathrm{Br}(T^c_N)$$ est un isomorphisme, car $M \cap N = k$. L'image de $B$ engendre donc bien le quotient $\mathrm{Br}(X_\theta)/\mathrm{Br}_0(X_\theta)$. Une application du r\'esultat de Sansuc \cite[Corollaire 8.7]{Sansuc} \`a la fibre $X_\theta$ donne l'existence d'un $N$-point sur la fibre $X_\theta$. Comme $[N : k] \equiv 1 \pmod{ne}$, ceci implique bien l'existence d'un z\'ero-cycle de degr\'e $1$ sur $X$. \end{proof}

\section{Une remarque sur l'hypoth\`ese de Schinzel}

On pr\'esente dans ce dernier paragraphe une remarque sur l'hypoth\`ese de Schinzel qui permet de l'utiliser de fa\c{c}on plus souple. Il s'agit du choix d'un param\`etre de Schinzel $\lambda$, comme dans \cite[hypoth\`ese (H$_1$)]{CTSD94}, avec la condition suppl\'ementaire que cette valeur de $\lambda$ appartienne \`a un sous-ensemble hilbertien donn\'e de $k$. Que ceci devrait \^etre possible semble avoir \'et\'e sugg\'er\'e pour la premi\`ere fois dans \cite{SDLetter}.

\subsection{Sur la topologie d'un ensemble Hilbertien}  Il nous faut un r\'esultat pr\'eliminaire sur la topologie d'un ensemble hilbertien; pour cette derni\`ere notion, voir par exemple \cite{Ekedahl}.

\begin{prop} \label{HilbertSchinzel} Soit $k$ un corps de nombres. Soit $V$ une $k$-vari\'et\'e g\'eom\'etriquement int\`egre. Soit $\rho: X \to V$ un rev\^etement \'etale connexe et soit $P \in V(k)$ tel que la fibre de $\rho$ en $P$ soit connexe. Soit $S \subseteq \Omega_k$ un ensemble fini de places de $k$. Alors il existe un ensemble fini $T \subseteq \Omega_k^{<\infty}$ avec $S \cap T = \emptyset$ et un voisinage $N$ de $P$ dans $\prod_{v \in T} V(k_v)$ dans la topologie naturelle tel que pour tout $Q \in N(k)$, la fibre $\rho^{-1}(Q)$ soit connexe.
\end{prop}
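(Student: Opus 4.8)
L'id\'ee est de ramener l'\'enonc\'e \`a un probl\`eme portant sur un rev\^etement \'etale d'une \emph{courbe}, o\`u l'on peut invoquer des r\'esultats classiques d'irr\'eductibilit\'e (th\'eor\`eme d'irr\'eductibilit\'e de Hilbert sous forme effective, ou plut\^ot sa variante topologique due \`a Ekedahl, cf. \cite{Ekedahl}). On commence par choisir une courbe lisse $C \subseteq V$ passant par $P$ et telle que l'image inverse $\rho^{-1}(C)$ reste connexe : comme $\rho$ est un rev\^etement \'etale connexe d'une vari\'et\'e g\'eom\'etriquement int\`egre, il correspond \`a un sous-groupe d'indice fini du groupe fondamental $\pi_1(V,\overline{P})$ qui n'est contenu dans aucun sous-groupe normal propre s'annulant sur l'image de $\pi_1(C)$ — plus pr\'ecis\'ement, un argument de type Bertini (sur un corps de caract\'eristique z\'ero, on peut prendre pour $C$ l'intersection de $V$ avec un sous-espace lin\'eaire g\'en\'erique d'une immersion dans un espace projectif, en restant dans le lieu lisse et en passant par $P$) assure que $\pi_1(C,\overline{P}) \to \pi_1(V,\overline{P})$ est surjectif, donc que $\rho^{-1}(C) \to C$ est encore un rev\^etement \'etale connexe, et sa fibre en $P$ est $\rho^{-1}(P)$, connexe par hypoth\`ese.

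\textbf{Passage \`a la droite affine.} On peut alors normaliser $C$ et, quitte \`a la restreindre, supposer que $C$ est une courbe affine lisse munie d'un morphisme \'etale (ou au moins g\'en\'eriquement \'etale) vers un ouvert $W$ de $\mathbb{A}^1_k$, $P$ s'envoyant sur un point rationnel $\tau \in W(k)$. En composant, on obtient un rev\^etement (ramifi\'e au plus au-dessus du compl\'ement de $W$) $X' = \rho^{-1}(C) \to \mathbb{A}^1_k$ connexe, dont la fibre au-dessus de $\tau$ contient un point rationnel (ou du moins une composante connexe g\'eom\'etriquement int\`egre) — c'est ici qu'intervient la connexit\'e de $\rho^{-1}(P)$. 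On est ramen\'e \`a l'\'enonc\'e suivant, de nature plus classique : \'etant donn\'e un rev\^etement connexe $X' \to \mathbb{A}^1_k$ et un point rationnel $\tau$ o\`u la fibre est connexe, trouver un ensemble fini $T$ de places finies (disjoint de $S$) et un voisinage $T$-adique de $\tau$ dans $\prod_{v \in T} k_v$ tel que pour tout $\lambda$ rationnel dans ce voisinage, la fibre $X'_\lambda$ reste connexe.

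\textbf{Le c\oe ur de l'argument.} La r\'esolution de ce dernier point passe par une variante quantitative du th\'eor\`eme d'irr\'eductibilit\'e de Hilbert : on veut non seulement que l'ensemble des $\lambda$ convenables soit hilbertien (ce qui est standard, la fibre g\'en\'erique \'etant connexe), mais que cet ensemble soit \emph{ouvert} pour une topologie $T$-adique ad\'equate, avec $\tau$ dans son adh\'erence. Le point cl\'e est que la connexit\'e de la fibre $X'_\lambda$ est contr\^ol\'ee par l'action de Frobenius : pour une place $v \notin S$ de bonne r\'eduction, si $\lambda$ est $v$-adiquement proche de $\tau$, alors la r\'eduction modulo $v$ de $X'_\lambda$ ``ressemble'' \`a celle de $X'_\tau$, qui est connexe ; un th\'eor\`eme de Chebotarev g\'eom\'etrique (comptage de points sur les corps finis, estimations de Lang-Weil appliqu\'ees au rev\^etement, comme dans les travaux d'Ekedahl) permet alors d'exhiber, pour un choix convenable de l'ensemble fini $T$ de places, suffisamment de contraintes de congruence pour forcer la classe de conjugaison de Frobenius dans le groupe de Galois du rev\^etement \`a engendrer, ce qui \'equivaut \`a la connexit\'e de la fibre. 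C'est exactement l'\'enonc\'e de \cite[th\'eor\`eme topologique de Hilbert]{Ekedahl} — ou plus directement, il suffit d'invoquer tel quel la version topologique du th\'eor\`eme d'irr\'eductibilit\'e de Hilbert pour les rev\^etements de $\mathbb{A}^1$, qui garantit que l'ensemble hilbertien des bons $\lambda$ contient un ensemble de la forme voulue ; le fait que $\tau$ puisse \^etre plac\'e dans l'adh\'erence vient de ce que la fibre en $\tau$ est elle-m\^eme connexe, donc $\tau$ est d\'ej\`a un ``bon'' point et aucune obstruction locale n'appara\^it en les places de $T$.

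\textbf{L'obstacle principal.} La principale difficult\'e technique est le contr\^ole de la ramification lors du passage de $V$ \`a la courbe $C$ puis \`a $\mathbb{A}^1_k$ : il faut s'assurer que le rev\^etement obtenu reste \'etale (ou \`a ramification contr\^ol\'ee) au voisinage de $\tau$, et surtout que la connexit\'e de la fibre se pr\'eserve \`a chaque \'etape — la connexit\'e d'un rev\^etement peut se perdre par restriction \`a une sous-vari\'et\'e si celle-ci est mal choisie. Le deuxi\`eme point d\'elicat est de s'assurer que l'ensemble $T$ peut \^etre pris disjoint de $S$ : cela revient \`a v\'erifier qu'on dispose d'une infinit\'e de places ``utiles'' (de bonne r\'eduction, non ramifi\'ees dans la cl\^oture galoisienne du rev\^etement, et portant les bonnes congruences de Frobenius), ce qui r\'esulte \`a nouveau du th\'eor\`eme de densit\'e de Chebotarev appliqu\'e \`a l'extension galoisienne correspondante, une fois \'ecart\'ees les places de $S$ et les places de mauvaise r\'eduction, toutes deux en nombre fini. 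La discussion utile mentionn\'ee dans les remerciements avec O. Wittenberg concerne vraisemblablement pr\'ecis\'ement ce contr\^ole topologique fin de l'ensemble hilbertien.
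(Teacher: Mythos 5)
Your overall strategy --- control the Frobenius conjugacy class at a well-chosen finite set of places, using that the fibre $\rho^{-1}(P)$ is connected --- is indeed the right one, and you do identify the two key mechanisms (Chebotarev/Frobenius, and Jordan's observation that a subgroup meeting every conjugacy class is the whole group). But as written the proposal has two genuine gaps.

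First, the detour through a curve $C\subseteq V$ and then through $\mathbb{A}^1$ is not only superfluous but actively harmful. After composing with a finite map $f\colon C\to\mathbb{A}^1$, the morphism $\rho^{-1}(C)\to\mathbb{A}^1$ is no longer \'etale, and its fibre over $\tau=f(P)$ contains $\rho^{-1}$ of the whole scheme-theoretic fibre $f^{-1}(\tau)\subset C$, not just $\rho^{-1}(P)$; there is no reason for it to be connected, so the reduction does not even preserve the hypothesis. Worse, even if you manage to produce a $T$-adic neighborhood of $\tau$ in $\prod_{v\in T}\mathbb{A}^1(k_v)$, the Proposition asks for a neighborhood of $P$ in $\prod_{v\in T}V(k_v)$, and you never lift your conclusion from the line back to $V$. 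The paper's proof avoids both problems entirely by working directly with a model $\mathcal{V}$ over $\mathcal{O}_{k,S}$ and taking $N$ to be the product of residue disks of $P$ at the places of $T$: the Frobenius of any $Q$ in such a disk is conjugate to that of $P$ because it only depends on the reduction.

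Second, your appeal to Ekedahl's ``topological'' Hilbert irreducibility theorem is essentially circular at the crucial point. That theorem produces a $T$-adic open set contained in the Hilbert set, but says nothing about whether a \emph{given} good point $P$ lies in its $T$-adic interior. The assertion that ``$\tau$ est d\'ej\`a un bon point et aucune obstruction locale n'appara\^it'' is precisely what must be proved. The missing step, made explicit in the paper, is this: one may assume $\rho$ Galois of group $G$; for each $g\in G$, pull $\mathcal{X}\to\mathcal{X}/\langle g\rangle\to\mathcal{V}$ back along $P\colon\mathrm{Spec}\,\mathcal{O}_{k,S}\to\mathcal{V}$ to get finite $\mathcal{O}_{k,S}$-algebras $B\supseteq A$ with fraction fields $M\supseteq L$; \emph{because $\rho^{-1}(P)$ is connected}, $L$ and $M$ are fields and $M/L$ is cyclic of group $\langle g\rangle$, and a Chebotarev-type result for Dedekind rings (\cite[Proposition 2.2.1]{Har}) supplies a place $v_g\notin S$ with a degree-one place of $L$ above it that is inert in $M$, i.e. $\mathrm{Frob}_{v_g}$ at $P$ is conjugate to $g$. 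Taking $T=\{v_g\}_{g\in G}$ and $N$ the residue disks then does the job. Your Lang-Weil remarks point in this direction, but the link between ``$\rho^{-1}(P)$ connected'' and ``$\mathrm{Frob}_{v}(P)$ realizes every $g\in G$ at suitable $v$'' must be spelled out; it cannot be outsourced to Ekedahl.
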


\begin{proof}[D\'emonstration] On agrandissant $S$ si n\'ecessaire, on peut supposer que la situation s'\'etend sur $\mathcal{O}_{k,S}$. Partons donc d'un morphisme dominant $\pi: \mathcal{V} \to \mathrm{Spec}(\mathcal{O}_{k,S})$ dont la fibre g\'en\'erique est une $k$-vari\'et\'e g\'eom\'etriquement int\`egre, d'un rev\^etement \'etale connexe $\rho:\mathcal{X} \to \mathcal{V}$ de fibre g\'en\'erique int\`egre et d'un point $P \in \mathcal{V}(\mathcal{O}_{k,S})$ tel que la fibre de $\rho$ en $P$ soit connexe. On peut supposer \'egalement que $\rho$ est Galoisien de groupe $G$. 

D\'emontrons d'abord que pour tout  \'el\'ement $g \in G$, il existe une place finie $v_g \in \Omega_k^{< \infty} \setminus S$ telle que $\mathrm{Frob}_{v_g}\,P$ est conjugu\'e \`a $g$. Fixons $g \in G$. \'Ecrivons $\mathcal{X}' = \mathcal{X}/\langle g \rangle$, le quotient de $\mathcal{X}$ par l'action de $\langle g \rangle$, qui est un rev\^etement \'etale de $\mathcal{V}$. Consid\'erons le diagramme commutatif suivant, dont les carr\'es sont Cart\'esiens:  \begin{center}
\begin{tikzpicture}[auto]
\node (L1) {$\mathcal{X}$};
\node (L2) [below= 1.2cm of L1] {$\mathcal{X}'$};
\node (L3) [below= 1.2cm of L2] {$\mathcal{V}$};
\node (M1) [right= 1.5cm  of L1] {$\mathrm{Spec}\,B$};
\node (M2) at (M1 |- L2) {$\mathrm{Spec}\,A$};
\node (M3) at (M1 |- L3) {$\mathrm{Spec}\,\mathcal{O}_{k,S}$};
\node (R1) [right= 1.2cm  of M1] {$\mathrm{Spec}\,M$};
\node (R2) at (R1 |- M2) {$\mathrm{Spec}\,L$};
\node (R3) at (R1 |- M3) {$\mathrm{Spec}\,k$};

\draw[<-] (L1) to node {} (M1);
\draw[<-] (M1) to node {} (R1);
\draw[<-] (L2) to node {} (M2);
\draw[<-] (M2) to node {} (R2);
\draw[<-] (L3) to node {\footnotesize $P$} (M3);
\draw[<-] (M3) to node {} (R3);
\draw[->] (L1) to node {\footnotesize $\langle g \rangle$} (L2);
\draw[->] (L2) to node {} (L3);
\draw[->] (M1) to node {} (M2);
\draw[->] (M2) to node {} (M3);
\draw[->] (R1) to node {} (R2);
\draw[->] (R2) to node {} (R3);
\end{tikzpicture}
 \end{center} 
La fibre du morphisme $\rho: \mathcal{X} \to \mathcal{V}$ au-dessus du $k$-point $P$ est connexe. Les $k$-alg\`ebres $L$ et $M$ sont donc bien des corps et l'extension $M/L$ est cyclique, de groupe $\langle g \rangle$. Ceci implique que $A/\mathcal{O}_{k,S}$ est une extension finie d'anneaux de Dedekind. 

Donc \cite[Proposition 2.2.1]{Har} s'applique: il existe une place $v_g$ de $k$ en dehors de $S$ et une place $w$ de $L$ au-dessus de $v_g$ qui est de degr\'e $1$  et qui est inerte dans $M$. Par construction, $\mathrm{Frob}_{v_g}\,P$ est conjugu\'e \`a $g$.  

Ceci \'etant \'etabli, choisissons pour tout \'el\'ement $g \in G$ une telle place $v_g$ et prenons $T = \{v_g \mid g \in G\}$. Prenons pour $\mathcal{N}$ l'ensemble des $k$-points de $\mathcal{V}$ qui sont entiers en les places de $T$ et qui, pour tout $g \in G$, ont la m\^eme r\'eduction (comme $\kappa(v_g)$-point) que $P$. Il suffit de montrer que $\rho^{-1}(Q)$ est connexe pour tout $Q \in \mathcal{N}$. Ceci \'equivaut \`a dire que le groupe de Galois $\mathrm{Gal}(\overline{k}/k)$ se surjecte sur $G$ \`a travers un rel\`evement du morphisme compos\'e $$\mathrm{Spec}\,\overline{k} \longrightarrow \mathrm{Spec}\,k \stackrel{Q}{\longrightarrow} \mathcal{V}$$ \`a $\mathcal{X}$. Par construction des places $v_g$, cette derni\`ere assertion est satisfaite. \end{proof}

\begin{coro} \label{schinzelschinzel} Si on admet l'hypoth\`ese de Schinzel \cite[hypoth\`ese (H)]{CTSD94} sur $\mathbf{Q}$, alors la version \cite[hypoth\`ese (H$_1$)]{CTSD94} de cette hypoth\`ese vaut sur tout corps de nombres $k$, avec la condition suppl\'ementaire que l'on peut choisir le param\`etre $\lambda$ dans un ensemble hilbertien $H \subseteq k = \mathbb{A}^1_k(k)$ donn\'e.
\end{coro}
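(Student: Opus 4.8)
The plan is to deduce this from Proposition~\ref{HilbertSchinzel} together with the classical fact, recalled in \cite[\S 4]{CTSD94}, that hypothesis (H) over $\mathbf{Q}$ implies hypothesis (H$_1$) over an arbitrary number field $k$. Fix an instance of (H$_1$) over $k$: finitely many distinct irreducible monic polynomials $P_1(t),\dots,P_s(t) \in k[t]$, a finite set $S \subseteq \Omega_k$ of places, and local targets $\lambda_v \in k_v$ for $v \in S$; fix also the Hilbert subset $H \subseteq k = \mathbb{A}^1_k(k)$ in which the Schinzel parameter is required to lie. Since we only need to produce $\lambda$ inside $H$, we may replace $H$ by a smaller Hilbert set and thus assume $H = \{\, Q \in V(k) : \rho^{-1}(Q) \text{ connexe}\,\}$, where $V \subseteq \mathbb{A}^1_k$ is a dense open subset and $\rho : X \to V$ a connected finite \'etale covering; write $G_1(t),\dots,G_\ell(t) \in k[t]$ for the monic irreducible polynomials describing the finite closed set $\mathbb{A}^1_k \setminus V$.

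First I would use Hilbert irreducibility to choose a point $P_0 \in H$, so that $P_0 \in V(k)$ and $\rho^{-1}(P_0)$ is connected, and then apply Proposition~\ref{HilbertSchinzel} to the covering $\rho$, the point $P_0$ and the finite set $S$. This yields a finite set of finite places $T \subseteq \Omega_k^{<\infty}$ with $S \cap T = \emptyset$ together with a neighbourhood $N$ of $P_0$ in $\prod_{v \in T} V(k_v)$ such that every $k$-point of $V$ lying in $N$ has connected fibre under $\rho$, hence belongs to $H$.

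Next I would apply hypothesis (H$_1$) over $k$ to the enlarged instance given by the polynomials $P_1,\dots,P_s,G_1,\dots,G_\ell$ (keeping one copy of each; being irreducible, any two of them are equal or coprime), the finite set of places $S' = S \cup T$, the original targets $\lambda_v$ for $v \in S$, and, for $v \in T$, the $v$-adic coordinate of $P_0$ as target, with the approximation required sharp enough to force the resulting point into $N$. One obtains $\lambda \in k$, integral outside $S'$ and arbitrarily large at the archimedean places, which is $v$-adically close to $\lambda_v$ for every $v \in S'$ and has the property that for each polynomial in the list there is a place outside $S'$ at which its value at $\lambda$ is a uniformiser, this value being a unit at every other place outside $S'$. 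In particular the conditions attached to $G_1,\dots,G_\ell$ give $G_j(\lambda) \neq 0$ for all $j$, so $\lambda \in V(k)$, while the $T$-adic proximity gives $\lambda \in N$; hence $\lambda \in H$. The remaining assertions — $S$-adic proximity to the $\lambda_v$ and the prescribed factorisation behaviour of the $P_i(\lambda)$ — are precisely the conclusion of (H$_1$) for the original instance, so the corollary follows.

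The step I expect to require the most care is the bookkeeping of places: one must feed the set $S$ coming from the given (H$_1$)-instance into Proposition~\ref{HilbertSchinzel}, so that the auxiliary set $T$ is disjoint from it, and then apply (H$_1$) over the union $S \cup T$ — which is legitimate because (H$_1$) permits an arbitrary finite set of places with prescribed local conditions and an arbitrary finite family of irreducible polynomials. One should also observe that the ``Schinzel places'' produced by (H$_1$) automatically lie outside $S \cup T$ and therefore do not disturb the conditions defining $N$; the reduction of a general Hilbert set to one attached to a single connected \'etale covering of a dense open of $\mathbb{A}^1_k$ is standard.
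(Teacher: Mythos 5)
Your proof takes essentially the same route as the paper's: reduce to a single connected étale covering $\rho$ of a dense open $V \subseteq \mathbb{A}^1_k$, pick a base point $P_0 \in H$, invoke Proposition~\ref{HilbertSchinzel} to produce the auxiliary set $T$ disjoint from $S$ together with a $T$-adic neighbourhood landing inside $H$, and then apply (H$_1$) over the enlarged set $S \cup T$, imposing the $T$-adic coordinates of $P_0$ as extra local targets. The one place where you differ is in how you force $\lambda$ to lie in $V(k)$: you feed the defining polynomials $G_1,\dots,G_\ell$ of $\mathbb{A}^1_k\setminus V$ into (H$_1$) alongside the $P_i$, so that $G_j(\lambda)\neq 0$ comes out of the conclusion of (H$_1$). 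The paper instead writes the covering as extending over $\mathbb{A}^1_{\mathcal{O}_{k,S}}$ and relies on $\lambda$ being $S\cup T$-integral and large at the archimedean places to stay in the domain of $\rho$. Taken literally, a connected étale cover of all of $\mathbb{A}^1_k$ in characteristic zero is a base change $\mathbb{A}^1_L$ and yields a trivial Hilbert set, so your explicit handling of the boundary via the $G_j$ is the more careful and robust way to write the argument; both yield the same conclusion. One small point worth keeping in mind: the conclusion of (H$_1$) you produce is relative to $S\cup T$ rather than the original $S$ (you have no control on the valuations of $P_i(\lambda)$ at places of $T$), but this is exactly what the paper's proof produces as well, and is harmless in all the fibration-method applications, where one is free to enlarge the exceptional finite set of places.
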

\begin{proof}[D\'emonstration] Partons de polyn\^omes $P_i(t)$ pour $1 \leq i \leq n$ et d'\'el\'ements $\lambda_v \in k_v$ pour $v \in S$, o\`u $S \subseteq \Omega_k$ est fini, comme dans \cite[hypoth\`ese (H$_1$)]{CTSD94}. En agrandissant $S$ si n\'ecessaire, on peut supposer que l'ensemble hilbertien $H$ est associ\'e \`a un rev\^etement \'etale de $\mathcal{O}_{k,S}$-sch\'emas $\rho: \mathcal{X} \to \mathbb{A}^1_{\mathcal{O}_{k,S}}$, comme dans la proposition \ref{HilbertSchinzel}. 

Choisissons un $k$-point $P$ de coordonn\'ee $\mu$ tel que la fibre $\rho^{-1}(P)$ soit connexe. On trouve un ensemble fini $T \subseteq \Omega_k^{< \infty}$ avec $S \cap T = \emptyset$ et un voisinage $\mathcal{N}$ de $P$ dans la topologie induite par $T$ tel que $\mathcal{N} \subseteq H$. Prenons $\lambda_v = \mu \in k_v$ pour $v \in T$. En appliquant \cite[hypoth\`ese (H$_1$)]{CTSD94} aux polyn\^omes $P_i(t)$ pour $1 \leq i \leq n$ et aux $\lambda_v$ pour $v \in S \cup T$, on obtient maintenant le r\'esultat voulu.
\end{proof}
\subsection{Exemple d'application} L'int\'er\^et du Corollaire \ref{schinzelschinzel} ci-dessus dans le contexte de la m\'ethode des fibrations vient de \cite[Th\'eor\`eme 3.5.1]{Har}. Voici un simple exemple d'application dans ce sens, qui g\'en\'eralise \cite[Theorem 1.1.e]{CTSkoSD98}:

\begin{coro} \label{hasseoverhilbert} Soit $X$ une $k$-vari\'et\'e projective et lisse, munie d'un morphisme dominant $f: X \to \mathbb{P}^1_k$, telle que la condition suivante soit satisfaite: pour tout point ferm\'e $P$ de $\mathbb{P}^1_k$,  la fibre $X_P$ a une composante $Y_P$ de multiplicit\'e $1$ telle que la cl\^oture alg\'ebrique de $k(P)$ dans le corps de fonctions de $Y_P$ soit une extension ab\'elienne de $k(P)$. 

Supposons que le principe de Hasse et l'approximation faible valent pour les fibres de $f$ au-dessus d'un ensemble hilbertien de $\mathbb{P}^1_k$. Admettons l'hypoth\`ese de Schinzel. Alors $X(k)$ est dense dans $X(\mathbf{A}_k)^{\mathrm{Br}_{\mathrm{vert}}(X)}$.  \end{coro}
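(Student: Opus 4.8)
Le plan est de reprendre la preuve de \cite[Theorem 1.1.e]{CTSkoSD98} (ou d'invoquer \cite[Th\'eor\`eme 3.5.1]{Har}), en rempla\c{c}ant l'usage de l'hypoth\`ese (H$_1$) de \cite{CTSD94} par le Corollaire \ref{schinzelschinzel}, ce qui permet de forcer le param\`etre de Schinzel \`a appartenir \`a l'ensemble hilbertien donn\'e. On se ram\`enerait d'abord, quitte \`a remplacer $X$ par un mod\`ele birationnel convenable (le groupe $\mathrm{Br}_{\mathrm{vert}}(X)$ ne d\'epend que de la classe birationnelle de $X$), au cas o\`u il existe un ouvert $U \subseteq \mathbb{P}^1_k$, de compl\'ementaire form\'e de points ferm\'es $Q_1,\cdots,Q_s$ d\'ecrits par des polyn\^omes unitaires irr\'eductibles $F_i(t) \in k[t]$, tel que $f$ soit lisse \`a fibres g\'eom\'etriquement int\`egres au-dessus de $U$. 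On partirait d'un point $(P_v)_{v \in \Omega_k} \in X(\mathbf{A}_k)^{\mathrm{Br}_{\mathrm{vert}}(X)}$ et d'un ensemble fini de places $S$ de $k$, que l'on peut supposer contenir les places archim\'ediennes, et l'on chercherait \`a approcher $(P_v)_{v \in S}$ par un point de $X(k)$.

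La premi\`ere \'etape s\'erieuse consisterait \`a exploiter l'hypoth\`ese d'ab\'elianit\'e des fibres. Gr\^ace \`a la suite exacte de Faddeev pour $\mathrm{Br}\,k(t)$, le fait que la cl\^oture alg\'ebrique de $k(Q_i)$ dans $k(Y_{Q_i})$ soit ab\'elienne sur $k(Q_i)$ entra\^ine que tout \'el\'ement de $\mathrm{Br}_{\mathrm{vert}}(X)$ co\"incide, modulo $\mathrm{Br}(k)$, avec une somme d'alg\`ebres de la forme $\mathrm{Cores}_{k(Q_i)/k}(\chi_i, t - e_i)$, o\`u $e_i$ est la classe de $t$ dans $k(Q_i) = k[t]/(F_i(t))$ et $\chi_i$ est un caract\`ere d'une extension ab\'elienne de $k(Q_i)$ (c'est pr\'ecis\'ement l\`a que l'hypoth\`ese intervient dans \cite[Theorem 1.1]{CTSkoSD98}). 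Comme dans la preuve du Th\'eor\`eme \ref{principal}, on fixerait une extension ab\'elienne finie $M/k$ assez grande et, apr\`es avoir agrandi $S$ pour disposer de mod\`eles entiers de bonne r\'eduction sur $\mathcal{O}_{k,S}$, on appliquerait le lemme formel de Harari \cite[Corollaire 2.6.1]{Har} au sous-groupe fini de $\mathrm{Br}(X_U)$ engendr\'e par ces alg\`ebres et par les $(F_i(t),\chi)$, $\chi$ caract\`ere de $M/k$: on obtiendrait un ensemble fini $S_1 \supseteq S$ et des points locaux $P_v \in X_U(k_v)$ pour $v \in S_1 \setminus S$ tels que $\sum_{v \in S_1} \mathrm{inv}_v(\mathcal{A}(P_v)) = 0$ pour chacune de ces alg\`ebres.

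Vient ensuite la partie centrale. On appliquerait le Corollaire \ref{schinzelschinzel} aux polyn\^omes $F_i(t)$ et aux coordonn\'ees $\lambda_v$ des points $f(P_v)$ ($v \in S_1$), en prenant pour ensemble hilbertien celui de l'\'enonc\'e. On obtiendrait $\lambda \in H$, arbitrairement proche de $\lambda_v$ pour $v \in S_1$ finie et assez grand aux places archim\'ediennes, tel que pour chaque $i$ il existe une place $v_i \notin S_1$ en laquelle $F_i(\lambda)$ est une uniformisante, $F_i(\lambda)$ \'etant une unit\'e hors de $S_1 \cup \{v_1,\cdots,v_s\}$. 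Hors de $S_1 \cup \{v_1,\cdots,v_s\}$, la bonne r\'eduction, les estimations de Lang--Weil et le lemme de Hensel fourniraient un $\mathcal{O}_v$-point de la fibre $X_\lambda$ en lequel tous les invariants locaux s'annulent. En la place $v_i$, la loi de r\'eciprocit\'e globale, jointe \`a l'annulation des sommes pr\'ec\'edentes, forcerait $\mathrm{inv}_{v_i}(F_i(\lambda),\chi) = 0$ pour tout caract\`ere $\chi$ de $M/k$; comme $F_i(\lambda)$ est une uniformisante en $v_i$ et l'extension $M/k$ est ab\'elienne, cela signifierait que $v_i$ est totalement d\'ecompos\'ee dans l'extension ab\'elienne de $k(Q_i)$ figurant dans l'hypoth\`ese, donc que la composante de multiplicit\'e $1$ de $X_{Q_i}$ reste g\'eom\'etriquement int\`egre apr\`es r\'eduction en $v_i$; Lang--Weil et Hensel produiraient alors un $k_{v_i}$-point de $X_\lambda$.

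On disposerait ainsi d'un point ad\'elique de $X_\lambda$ dont la composante en $v \in S$ peut \^etre rendue $v$-adiquement proche de $P_v$ (on rel\`everait $P_v$ dans $X_U$, puis, $\lambda$ \'etant proche de $\lambda_v$, le th\'eor\`eme des fonctions implicites donnerait un $k_v$-point de $X_\lambda$ proche de $P_v$). Enfin, $\lambda$ appartenant \`a $H$, le principe de Hasse vaut pour $X_\lambda$, qui admet donc un point de $X_\lambda(k) \subseteq X(k)$, et l'approximation faible pour $X_\lambda$ -- \'egalement suppos\'ee sur $H$ -- permettrait de le choisir arbitrairement proche de $P_v$ pour $v \in S$, ce qui donnerait la densit\'e voulue. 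L'obstacle principal -- celui qui, sans le Corollaire \ref{schinzelschinzel}, emp\^echerait la preuve d'aboutir -- est d'obtenir un $\lambda$ satisfaisant \emph{simultan\'ement} les conditions de factorisation \`a la Schinzel sur les $F_i(\lambda)$ et l'appartenance \`a l'ensemble hilbertien $H$; tout le reste rel\`eve de la m\'ethode des fibrations de \cite{CTSkoSD98}, l'ingr\'edient technique \`a surveiller \'etant que l'ab\'elianit\'e des fibres entra\^ine \`a la fois la description de $\mathrm{Br}_{\mathrm{vert}}(X)$ ci-dessus et la d\'ecomposition totale aux places de Schinzel.
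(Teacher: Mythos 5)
Your proposal takes the same route as the paper: the paper's own proof is a one-line deferral to Corollaire~\ref{schinzelschinzel} together with the proof of \cite[Theorem 1.1]{CTSkoSD98}, and you reconstruct precisely that argument, with Corollaire~\ref{schinzelschinzel} correctly substituted at the point where (H$_1$) would otherwise be invoked so as to land the Schinzel parameter $\lambda$ in the given Hilbert set $H$.

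One local imprecision worth flagging, though it does not change the overall strategy. At the Schinzel places $v_i$, the reciprocity argument must be run directly on the corestricted classes $\mathrm{Cores}_{k(Q_i)/k}(\chi_i,\,t-e_i)$ that generate $\mathrm{Br}_{\mathrm{vert}}(X)/\mathrm{Br}(k)$ --- classes you correctly identify at the start of your argument. From their vanishing at $v_i$, combined with $v_i(F_i(\lambda))=1$, one deduces (as in \cite{CTSkoSD98}) the existence of a degree-one place $w_i$ of $k(Q_i)$ above $v_i$ which splits completely in the abelian extension $K_i/k(Q_i)$ from the hypothesis; this is what lets Lang--Weil and Hensel produce a $k_{v_i}$-point on $X_\lambda$. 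The sentence in your proposal that instead infers this from $\mathrm{inv}_{v_i}(F_i(\lambda),\chi)=0$ for characters $\chi$ of an auxiliary abelian extension $M/k$ proves only that $v_i$ is totally split in $M/k$, which is not the needed condition when $k(Q_i)\neq k$: the extension $K_i$ lives over $k(Q_i)$, not over $k$. Those auxiliary symbols $(F_i(t),\chi)$ are the right tool in the proof of Th\'eor\`eme~\ref{principal}, where the relevant splitting field is genuinely an extension of $k$, but here the corestricted vertical classes must carry the argument; otherwise the reconstruction matches what the paper has in mind.
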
 

Rappelons que le groupe de Brauer vertical $\mathrm{Br}_{\mathrm{vert}}(X)$ est le sous-groupe de $\mathrm{Br}(X)$ qui consiste des \'el\'ements dont la restriction \`a $\mathrm{Br}(X_\eta)$ provient de $\mathrm{Br}(\eta)$. 

La validit\'e de cet \'enonc\'e r\'esulte imm\'ediatement du corollaire \ref{schinzelschinzel} et de la preuve de \cite[Theorem 1.1]{CTSkoSD98}. Voici un exemple d'application concr\`ete de ce r\'esultat. 

\begin{prop} Soit $X$ un mod\`ele projectif et lisse d'une $k$-vari\'et\'e affine donn\'e par une \'equation de la forme $$x^2 - a(t) y^2 = P(t,z),$$ o\`u le polyn\^ome $P(t,z)$ est irr\'eductible de degr\'e $4$ vu comme \'el\'ement de $k(t)[z]$ et a la propri\'et\'e que $P(t_0,z)$ ne s'annule pas identiquement si $t_0$ est une racine de $a(t)$. Admettons l'hypoth\`ese de Schinzel. Alors $X(k)$ est dense dans $X(\mathbf{A}_k)^{\mathrm{Br}_\mathrm{vert}(X)}$. \end{prop}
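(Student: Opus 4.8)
The strategy is to realize this as an instance of Corollaire \ref{hasseoverhilbert}, so I must verify its two hypotheses for the morphism $f: X \to \mathbb{P}^1_k$ given by projection onto the $t$-coordinate: first, the abelianity condition on the non-split fibres, and second, that the Hasse principle and weak approximation hold for the fibres above a Hilbertian subset of $\mathbb{P}^1_k$. The affine variety is defined by $x^2 - a(t)y^2 = P(t,z)$, which over the function field $k(t)$ describes a conic bundle (or rather a Châtelet-type surface) over the line with parameter $z$: for fixed generic $t$, the fibre $X_{t}$ is birational to the affine surface $x^2 - a(t)y^2 = P(t,z)$ over $k(t)$, a Châtelet surface.

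\emph{Verifying abelianity.} I must check that for every closed point $Q$ of $\mathbb{P}^1_k$, the fibre $X_Q$ has a multiplicity-one component $Y_Q$ whose algebraic closure of $k(Q)$ in $k(Y_Q)$ is abelian over $k(Q)$. For a point $Q$ above which $t = t_0$ with $t_0 \in \overline{k}$, the fibre is (a smooth model of) $x^2 - a(t_0)y^2 = P(t_0,z)$. If $a(t_0) \neq 0$, this is a Châtelet surface over $k(t_0)$, geometrically irreducible; its only splitting field for the relevant component is $k(t_0)(\sqrt{a(t_0)})$, a quadratic hence abelian extension. If $a(t_0) = 0$, the equation degenerates to $x^2 = P(t_0, z)$, and the hypothesis that $P(t_0,z)$ does not vanish identically guarantees that the fibre $X_Q$ still has a multiplicity-one component: the component is governed by $x = \pm\sqrt{P(t_0,z)}$, so the relevant field extension is generated by a square root, again abelian (indeed at most quadratic) over $k(t_0)$. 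The point at infinity of $\mathbb{P}^1_k$ must be handled separately by passing to the other affine chart, but the same analysis applies after the obvious change of coordinates. I expect this verification to be the main obstacle: one must be careful about which component has multiplicity one when $a(t_0)=0$, and about the fact that a smooth projective model $X$ may have additional exceptional components in degenerate fibres — but only the multiplicity-one components matter for the criterion, and the argument above exhibits one with the required abelian splitting field.

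\emph{Verifying Hasse/weak approximation on a Hilbertian set.} The fibre above a point $t_0 \in k$ in general position is the Châtelet surface $x^2 - a(t_0)y^2 = P(t_0,z)$ over $k$, where $P(t_0,z) \in k[z]$ is, for $t_0$ in a suitable Hilbertian subset of $k$, an irreducible polynomial of degree $4$ (this is where the irreducibility of $P(t,z)$ as an element of $k(t)[z]$, together with Hilbert irreducibility, enters). By the theorem of Colliot-Thélène, Sansuc and Swinnerton-Dyer on Châtelet surfaces — the Hasse principle and weak approximation hold for $x^2 - ay^2 = P(z)$ with $P$ of degree $4$ — these properties hold for the fibres above all $t_0$ in this Hilbertian set. (Strictly, one should note that the smooth model $X_{t_0}$ of the affine surface is what is meant, and that the CTSSD result applies to exactly such models; one also intersects with the open set where $a(t_0)\neq 0$ and $X_{t_0}$ is smooth, which only shrinks the Hilbertian set.)

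\emph{Conclusion.} With both hypotheses of Corollaire \ref{hasseoverhilbert} in hand, and assuming Schinzel's hypothesis, that corollary yields directly that $X(k)$ is dense in $X(\mathbf{A}_k)^{\mathrm{Br}_{\mathrm{vert}}(X)}$, which is the assertion. It remains only to record that the conditions of the corollary are genuinely met by writing out the fibre-by-fibre analysis above; no further input is needed.
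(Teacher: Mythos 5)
Your argument follows the same route as the paper: fibre via the $t$-projection, verify the abelianity hypothesis of Corollaire \ref{hasseoverhilbert} on the degenerate fibres, and use Hilbert irreducibility for $P(t,z)$ together with the Colliot-Th\'el\`ene--Sansuc--Swinnerton-Dyer theorem on degree-four Ch\^atelet surfaces to supply a Hilbertian set over which the Hasse principle and weak approximation hold, then invoke the corollary. One small imprecision worth flagging: when $a(t_0)\neq 0$ and $P(t_0,z)\not\equiv 0$ the fibre is geometrically integral, so the algebraic closure of $k(t_0)$ in its function field is $k(t_0)$ itself (not $k(t_0)(\sqrt{a(t_0)})$ as you wrote) --- but a trivial extension is of course abelian, so this does not affect the conclusion.
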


\begin{proof}[D\'emonstration] La vari\'et\'e $X$ est fibr\'ee sur $\mathbb{P}^1_k$ via la projection sur $t$. Il est clair que l'hypoth\`ese sur les ``mauvaises'' fibres dans le Corollaire \ref{hasseoverhilbert} est satisfaite sous les conditions de l'\'enonc\'e. L'ensemble des valeurs $s \in k$ ayant la propri\'et\'e que le polyn\^ome $P(s,z) \in k[z]$ reste irr\'eductible est un ensemble Hilbertien dans $\mathbb{A}^1_k(k)$. Pour une telle valeur de $s$, la surface de Ch\^atelet donn\'ee par l'\'equation affine $x^2 - a(s) y^2 = P(s,z)$ satisfait au principe de Hasse et \`a l'approximation faible \cite{CTSanSD87a}. Ceci suffit pour appliquer le Corollaire \ref{hasseoverhilbert}. \end{proof}

\end{document}